\renewcommand{\H}{\mathcal{H}}
\newcommand{\G}{\mathcal{G}}
\newcommand{\C}{\mathcal{C}}
\newcommand{\B}{\mathcal{B}}
\renewcommand{\O}{\mathcal{O}}
\newcommand{\Mod}{\mbox{Mod}}
\newcommand{\R}{\mathbb{R}}
\newcommand{\h}{\mathbb{H}^2}
\renewcommand{\S}{\mathcal{S}}
\renewcommand{\Im}{\mathbf{Im \,}}
\newtheorem{theorem}{Theorem}[section]
\theoremstyle{plain}
\newtheorem{lem}[theorem]{Lemma}
\newtheorem{claim}[theorem]{Claim}
\newtheorem{cor}[theorem]{Corollary}
\newtheorem{rem}[theorem]{Remark}
\numberwithin{equation}{subsection}
\theoremstyle{definition}
\newtheorem{defi}[theorem]{Definition}
\title[A Birman-Series type result]{A Birman-Series type result for geodesics with infinitely many self-intersections}
\author{Jenya Sapir}
\begin{document}

\begin{abstract}
Given a hyperbolic surface $\S$, a classic result of Birman and Series states that for each $K$, all complete geodesics with at most $K$ self-intersections can only pass through a certain nowhere dense, Hausdorff dimension 1 subset of $\S$. We define a self-intersection function for each complete geodesic, which bounds the number of self-intersections in finite length subarcs. We then extend the Birman-Series result to sets of complete geodesics with certain bounds on their self-intersection functions. In fact, we get the same conclusion as the Birman-Series result for sets of complete geodesics whose self-intersection functions are in $o(l^2)$, where $l$ measures arclength.
\end{abstract}

\maketitle

\section{Introduction}
Let $\S$ be a genus $g$ surface with $n$ boundary components, and let $X$ be a hyperbolic metric on $\S$ in which each boundary component is geodesic. Consider the set $\G$ of complete geodesics on $\S$. In particular, geodesics in $\G$ never hit the boundary of $\S$. 

Given any subset $\H \subset \G$, we define the \textbf{image} of $\H$ in $\S$, denoted $\Im \H$, to be the set of points in $\S$ that lie on some curve in $\H$. 
Birman and Series showed that, for each $K$, if $\H$ is the set of all complete geodesics with at most $K$ self-intersections, then $\Im \H$ is nowhere dense and has Hausdorff dimension 1 \cite{BS85}. 

In this paper, we find much weaker conditions on subsets $\H \subset \G$ that give the same conclusion. We get these conditions by studying the self-intersection function of each $\gamma \in \G$, which is defined as follows. Take a complete geodesic $\gamma : \R \rightarrow \S$ parameterized by arclength. Let 
$
 \gamma_l = \gamma|_{[-\frac l2, \frac l2]}
$
be the length $l$ subarc of $\gamma$ centered at $\gamma(0)$. Then $f(l) = i(\gamma_l, \gamma_l)$ is the \textbf{self-intersection function} of $\gamma$. 

This function depends on the parameterization of $\gamma$, so we choose a parameterization for each $\gamma \in \G$ so that its self-intersection function is as small as possible. That is, if $\gamma : \R \rightarrow \S$ and $\gamma' : \R \rightarrow \S$ are two parameterizations by arclength of the same complete geodesic, then $\gamma$ has a smaller self-intersection function than $\gamma'$ if $i(\gamma_l, \gamma_l) \lesssim i(\gamma'_l, \gamma'_l)$. Note that we write $A(l) \lesssim B(l)$ if $\limsup_{l \rightarrow \infty} \frac{A(l)}{B(l)} \leq 1$. There need not be a parameterization of $\gamma$ with the least self-intersection function, so we make an arbitrary choice of parameterization for each $\gamma \in \G$.

Suppose $f : \R \rightarrow \R$ is any function. Let 
\[
 \G(f) = \{ \gamma \in \G \ | \ i(\gamma_l, \gamma_l) \lesssim f(l)\}
\]

\begin{theorem}
\label{thm:HausDim}
 For any $k > 0$, suppose $f(l)$ is a function with $f(l) \leq (k l)^2$ for all $l$ large enough. Then the Hausdorff dimension of $\Im \G(f)$ is at most $\mu(k)$, where $\lim_{k \rightarrow 0} \mu(k) = 1$.
In particular, if $f(l) = o(l^2)$, then $\Im \G(f)$ has Hausdorff dimension 1. 
\end{theorem}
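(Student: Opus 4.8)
The plan is to make the Birman--Series covering argument quantitative: the size of the cover will be controlled by a count of geodesic arcs of prescribed self-intersection growth, and this count will be converted into a Hausdorff dimension bound via box counting.

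\textbf{Reductions.} Since $\lesssim$ is a $\limsup$ condition, fix $\varepsilon>0$ and write $\Im\G(f)\subseteq\bigcup_{L_0\in\N}A_{L_0}$, where $A_{L_0}$ is the set of points $p\in\S$ that are the midpoint of some arc $\alpha$ of length $l$ lying in a complete geodesic, with $i(\alpha,\alpha)\le(1+\varepsilon)(kl)^2$ for all $l\ge L_0$. Two observations justify this. First, recentering the parametrization of $\gamma\in\G(f)$ at one of its points $\gamma(t_0)$ replaces $i(\gamma_l,\gamma_l)$ by $i(\gamma|_{[t_0-l/2,\,t_0+l/2]},\cdot)\le i(\gamma_{l+2|t_0|},\cdot)$, and $(k(l+2|t_0|))^2/(kl)^2\to1$; so every point of $\Im\G(f)$ is the midpoint of arbitrarily long arcs with $i\lesssim(kl)^2$ --- here it is used that $f$ grows no faster than a fixed polynomial. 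Second, $\dim_H$ is countably stable, so it suffices to bound $\overline{\dim}_B A_{L_0}$, uniformly in $L_0$.

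\textbf{Coding and the covering.} Fix a pants decomposition $\p$ of $\S$ by simple closed geodesics. A complete geodesic has a bi-infinite itinerary recording its successive arcs through the pants of $\p$ together with the integral twisting data at the cuffs; an arc of length $L$ has an itinerary of combinatorial length $\asymp L$, and $i(\alpha,\alpha)$ depends only on it. If two complete geodesics have itineraries agreeing over a window of combinatorial length $\asymp L$, they fellow-travel at bounded distance over that window, so by the uniform hyperbolicity of the geodesic flow on $X$ their endpoints at infinity agree to within $\asymp e^{-\lambda L}$ for some $\lambda=\lambda(X)>0$; hence they are $e^{-\lambda L}$-close over any fixed compact region of $\S$. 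Thus, writing $P_k(L)$ for the number of itineraries of combinatorial length $\asymp L$ realized by arcs with at most $\asymp(kL)^2$ self-intersections, the set $A_{L_0}$ is covered at scale $\epsilon\asymp e^{-\lambda L}$ by $\asymp P_k(L)$ thin rectangles of shape $O(1)\times\epsilon$, hence by $\asymp P_k(L)\,e^{\lambda L}$ balls of radius $\epsilon$, giving
\[
\overline{\dim}_B A_{L_0}\ \le\ 1+\limsup_{L\to\infty}\frac{\log P_k(L)}{\lambda L}\ =:\ 1+\nu(k),
\]
the summand $1$ being the one free ``along the geodesic'' direction. For $k$ unconstrained this just reproduces $\overline{\dim}_B\S=2$, so the whole point is that $\nu(k)\to0$ as $k\to0$.

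\textbf{The counting estimate --- the main obstacle.} It remains to prove $\nu(k)\to0$, and I expect this to be the crux. The mechanism should be a quantitative near-simplicity statement: an arc of length $L$ with at most $(kL)^2$ self-intersections is obtained from a \emph{simple} arc by at most $O(kL)$ controlled local modifications (surgeries along subsegments parallel to the fixed curves of $\p$), each carrying only $O(1)$ combinatorial data and a position among the $O(L)$ segments of its itinerary. The quadratic threshold enters precisely in the bound $O(kL)=O\!\big(\sqrt{(kL)^2}\big)$ on the number of modifications: a modification creating $n$ new self-intersections ties together $\asymp\sqrt n$ strands, and as the total length is $L$ the values of $n$ must be of order $kL$ on average over at most $kL$ modifications. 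Granting this, and that simple arcs of length $L$ carried by a fixed train track number $\mathrm{poly}(L)$ (the reason the Birman--Series count is polynomial), one obtains
\[
P_k(L)\ \le\ \mathrm{poly}(L)\cdot\binom{O(L)}{O(kL)}\cdot O(1)^{O(kL)}\ \le\ \exp\!\big(L\cdot(Ck\log\tfrac1k+o(1))\big),
\]
so $\nu(k)=O(k\log\tfrac1k)/\lambda\to0$. Setting $\mu(k):=1+\nu(k)$ (adjusted for the harmless $\varepsilon$ and the slack in $\lesssim$) then gives $\dim_H\Im\G(f)\le\mu(k)$ with $\mu(k)\to1$; and if $f=o(l^2)$, then $f(l)\le(kl)^2$ eventually for every $k>0$, so $\dim_H\Im\G(f)\le\inf_{k>0}\mu(k)=1$, while $\dim_H\Im\G(f)\ge1$ since $\Im\G(f)$ contains a simple closed geodesic (whose self-intersection function vanishes), hence it equals $1$. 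The routine parts are the reductions and the passage from a coding count to a box-dimension estimate; the real work is the near-simplicity decomposition and the bookkeeping that keeps the product over the $O(kL)$ surgeries subexponential --- and verifying that ``$l^2$'' is genuinely the threshold at which the argument degrades back toward dimension $2$.
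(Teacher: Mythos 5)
Your high-level architecture --- cover at scale $e^{-\lambda L}$ by thin rectangles indexed by the combinatorial data of length-$L$ arcs, count the rectangles, and convert to an upper box-dimension bound via countable stability of $\dim_H$ --- is sound, and your recentering reduction to the sets $A_{L_0}$ is essentially what the paper does in Lemma \ref{lem:NestedCover}. Your target exponent $\mu(k)-1 = O(k\log\tfrac1k)$ also matches the $\mu(k)$ computed in Lemma \ref{lem:ClosedGeodesicGrowth}. The route you propose is, however, genuinely different in its mechanics: the paper does not code by pants itineraries, but instead closes up each subarc to a nearby closed geodesic of comparable length and self-intersection (Lemma \ref{lem:ApproxArcs}), covers $\Im\G(f)$ by $\epsilon(n)$-neighborhoods of closed geodesics in $\G^c_n(c_X f(n))$ (Lemmas \ref{lem:FiniteCover}--\ref{lem:NestedCover}), and then counts those closed geodesics by decomposing into mapping-class-group orbits and invoking an orbit count $\#\O(L,K)\le\left(a_X \tfrac{L}{\sqrt K}+a_X\right)^{a_X\sqrt K}$ from \cite{SapirOrbits} together with Mirzakhani-type polynomial orbit-growth bounds. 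Working with closed geodesics rather than arc itineraries lets the paper lean on this existing machinery, which is the trade-off your more direct coding would forgo.

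The genuine gap is exactly where you flag it. The estimate $P_k(L)\le\exp\!\big(L\cdot(Ck\log\tfrac1k+o(1))\big)$ is asserted rather than proven, and the ``near-simplicity decomposition'' on which it rests is at present a heuristic: the claim that an arc with at most $(kL)^2$ self-intersections is produced from a simple arc by $O(kL)$ surgeries each carrying only $O(1)$ data, and the averaging argument about surgeries tying together $\asymp\sqrt n$ strands, would each require substantial work to make precise --- in particular it is not clear that a single surgery creating $n$ self-intersections can be encoded in $O(1)$ bits when $n$ is large, so the $\binom{O(L)}{O(kL)}\cdot O(1)^{O(kL)}$ bound is not yet justified. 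This counting estimate is the technical heart of the theorem; in the paper it is Lemma \ref{lem:ClosedGeodesicGrowth}, whose proof ultimately rests on an entire companion paper \cite{SapirOrbits}. Until your $P_k(L)$ bound is established, the argument is incomplete, though everything surrounding it is correct.
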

On the other hand, \cite{LS15} implies that $\Im \G(f)$ is dense whenever $f(l)$ is superlinear in $l$ (see Section \ref{sec:TheoremsForClosed}). Nevertheless, we get a nowhere density result once we get more control on the self-intersection function of our geodesics. So if $f : \R \rightarrow \R$ is any function, then let 
% \[
%  \G(f,l_0) = \{ \gamma \in \G \ | \ i(\gamma|_{[t, t+ l]},\gamma|_{[t, t + l]}) \leq f(l), \forall t, \forall l \geq l_0\}
% \]
\[
 \G(f,L) = \{ \gamma \in \G \ | \ i(\alpha,\alpha) \leq f(l), \forall \alpha \subset \gamma \text{ s.t. } l(\alpha) \geq L\}
\]
be the set of $\gamma \in \G$ such that all length $l$ subarcs have at most $f(l)$ self-intersections, whenever $l \geq L$. 
\begin{theorem}
\label{thm:NWD}
 There is a $k_0 > 0$ so that if $f(l) \leq (k_0 l)^2$ for all $l$, then $\Im \G(f, L)$ is nowhere dense for all $L \geq 0$.
\end{theorem}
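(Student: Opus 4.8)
The plan is to adapt the multi-scale combinatorial argument of Birman and Series \cite{BS85} to the families $\G(f,L)$, feeding it the exponential counting estimate that already underlies Theorem \ref{thm:HausDim} in place of their polynomial count. The first step is a reduction to a local statement: since $\S$ is a surface, $\Im\G(f,L)$ fails to be nowhere dense precisely when it is dense on some metric ball $B(x_0,\epsilon_0)$ with $\epsilon_0$ below the injectivity radius of $X$ at $x_0$, so it suffices to fix such an $x_0$ and produce a sub-ball disjoint from $\Im\G(f,L)$. Fix once and for all a compact geodesic fundamental polygon $\Pi$ for $X$; every $\gamma\in\G$ then has a bi-infinite cutting sequence relative to the sides of $\Pi$, and (as $\S$ is compact) the number of sides a subarc of $\gamma$ crosses is comparable, up to fixed multiplicative constants, to its hyperbolic length. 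Lifting to $\h$, a point $p\in B(x_0,\epsilon_0)$ lies on some $\gamma\in\G(f,L)$ only if a finite block of $\gamma$'s cutting sequence — of combinatorial length comparable to the number of translates of $\partial\Pi$ separating a chosen lift of $p$ from a chosen lift of $x_0$ — is realized by a geodesic arc passing through that lift of $p$. Thus it is enough to show that the cutting blocks realized by arcs of geodesics in $\G(f,L)$ omit enough of the ``combinatorial address space'' of $B(x_0,\epsilon_0)$, at \emph{every} scale, to leave a ball uncovered.

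The engine is the following estimate, which is essentially the quantitative content of the proof of Theorem \ref{thm:HausDim}: there are a constant $h>0$ (the exponential growth rate of the number of length‑$m$ cutting blocks realized by arbitrary geodesic arcs) and a function $\mu(k)$, depending only on $(\S,X)$ and $\Pi$, with $\mu(k)<h$ for $k$ small, such that the number of distinct length‑$m$ cutting blocks realized by geodesic arcs with at most $(km)^2$ self-intersections is at most $e^{\mu(k)m}$ for all $m$. Now let $\gamma\in\G(f,L)$ with $f(l)\le(k_0 l)^2$ for all $l$. By definition every subarc $\alpha\subset\gamma$ with $l(\alpha)\ge L$ satisfies $i(\alpha,\alpha)\le f(l(\alpha))\le(k_0\,l(\alpha))^2$, so — using the comparability of combinatorial and hyperbolic length — every sufficiently long subarc of $\gamma$ realizes a block from the restricted set above for $k$ a fixed multiple of $k_0$. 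Choosing $k_0$ small enough that $\mu$ of this multiple is $<h$, the length‑$m$ blocks realizable by $\G(f,L)$ form a fraction at most $e^{-(h-\mu)m}$ of all length‑$m$ blocks; crucially this deficit is uniform in $m$, because the growing self-intersection allowance $(k_0 l)^2$ only enters through the single exponent $\mu$.

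The final step is the nested-ball (Moran-type) construction of Birman and Series. Organize the combinatorial address space of $B(x_0,\epsilon_0)$ into a rooted tree whose depth‑$m$ nodes are the length‑$m$ cutting blocks compatible with the lift of $B(x_0,\epsilon_0)$, each node carrying an associated geodesic sub-polygon of $\h$; the Birman–Series refinement lemma guarantees that passing from a node to a child shrinks the sub-polygon by a definite factor while keeping its shape non-degenerate. By the previous paragraph, at every depth one can choose a child whose block is \emph{not} realized by any geodesic arc of any curve in $\G(f,L)$; iterating, one obtains a nested sequence of sub-polygons whose intersection projects to a ball $B(y,\rho)\subset B(x_0,\epsilon_0)$. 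No $\gamma\in\G(f,L)$ meets $B(y,\rho)$: such a $\gamma$ would have to realize, at the depth where its arc through $B(y,\rho)$ first separates from $x_0$, a block we have declared forbidden. Since $x_0$ and $\epsilon_0$ were arbitrary this gives nowhere density, and the same $k_0$ works for every $L\ge0$ because the argument only ever uses subarcs that are eventually as long as desired.

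The step I expect to be the main obstacle is making the counting estimate of the second paragraph genuinely uniform across scales: one must bound the number of realizable length‑$m$ blocks by $e^{\mu(k_0)m}$ with the \emph{same} exponent for all $m$, even though $(k_0 m)^2$ is unbounded, and then verify that pruning a positive proportion of children at every depth actually forbids a ball rather than merely a nowhere dense Cantor set — i.e. that the surviving sub-polygons can be chosen with uniformly bounded eccentricity, which is exactly where the Birman–Series geometric refinement (a realized block forces realization inside a controlled sub-polygon) must be re-derived in the present setting. A secondary technical point is the clean comparison of combinatorial and hyperbolic length used throughout; on a compact surface with geodesic boundary this is routine, but if $X$ has very short interior geodesics one must either choose $\Pi$ carefully or absorb the distortion into the constants defining $k_0$.
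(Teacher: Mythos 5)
Your approach is genuinely different from the paper's, and unfortunately it introduces real difficulties where the paper's argument is short. The paper never touches cutting sequences or a Moran construction: it takes the finite cover $\C_n$ of $\Im \G(f,L)$ by $\epsilon(n)$-neighborhoods of closed geodesics (furnished by Lemma \ref{lem:FiniteCover}, itself built on the arc-closing Lemma \ref{lem:ApproxArcs}), and simply estimates its Lebesgue measure. Each element of $\C_n$ is an $\epsilon(n)=2e^{-n/4}$-neighborhood of a geodesic of length $\le n$, so has measure $\preccurlyeq n e^{-n/4}$, while Lemma \ref{lem:ClosedGeodesicGrowth} gives $\#\G^c_n(c_Xf(n)) = o(\frac1n e^{\mu(k)n})$; hence $\lambda(\C_n) = o(e^{(\mu(k)-1/4)n})$, which tends to $0$ once $k$ is small enough that $\mu(k)<1/4$. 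For any ball $\B$, pick $n$ with $\lambda(\C_n) < \frac12\lambda(\B)$; since $\C_n$ is a finite union of thin open neighborhoods, $\B\setminus\C_n$ has non-empty interior, and $\Im\G(f,L)\subset\C_n$, so we are done. No combinatorial address system, no refinement lemma, no nested intersection.

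Two concrete problems in your proposal. First, the counting input you invoke (the number of realizable length-$m$ cutting blocks is $\le e^{\mu(k)m}$) is not what the paper proves; Lemma \ref{lem:ClosedGeodesicGrowth} counts closed geodesics in $\G^c_n(c_Xf(n))$, not cutting blocks of geodesic arcs. Translating from one to the other is exactly the content of Birman--Series's combinatorial machinery, so you would be re-deriving that machinery rather than citing it, and this is substantial. Second, the nested-ball step is logically flawed as written: if the sub-polygon shrinks by a definite factor at every depth, the infinite intersection is a point, not a ball. What you actually want is to stop at a single finite depth $m$ where the realizable sub-polygons have small total measure, and then observe that the complement of their (finite) union in your ball is open and non-empty --- which is precisely the paper's measure argument transported to the cutting-block picture. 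Once you make that correction, the Moran construction, the ``choose an unrealized child at every depth'' step, and the bounded-eccentricity worry all disappear; but then you have essentially reproduced the paper's proof with the extra burden of building the cutting-block count from scratch. (You also need to be careful with the claim that a forbidden block forces its sub-polygon to be disjoint from $\Im\G(f,L)$: a geodesic through a point of that sub-polygon need not pass near $x_0$ at all, so its cutting sequence near that point need not contain the forbidden block. The paper's formulation avoids this ambiguity because its cover $\C_n$ is a cover of $\Im\G(f,L)$ outright, not of a combinatorial proxy for it.)
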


\begin{rem}
 Both the original result of Birman and Series, as well as Theorems \ref{thm:HausDim} and \ref{thm:NWD}, still hold when $\S$ has a negatively curved metric with curvature bounded away from zero and infinity. However, the function $\mu$ and constant $k_0$ will depend on the metric. This is because the results below that use hyperbolic geometry can be proven in the general negative curvature case, but with different constants.
\end{rem}

% Note that whenever $f(l)$ grows faster than $l$, $G(f)$ does not depend on parameterization. If $\gamma \in \G(f)$ for some parameterization, then it will be in $G(f)$ for any parameterization (by arclength). So, if $\gamma'(t) = \gamma(t+a)$, then $|i(\gamma_l, \gamma_l) - i(\gamma'_l, \gamma'_l)| \leq c_X (l-a)a$, where $c_X$ depends only on the metric $X$. (For a proof, see Section \ref{}.) In general, we choose a parameterization for each $\gamma \in \G^c$ so that $i(\gamma_l, \gamma_l)$ grows as slowly as possible with $l$. That is, $i(\gamma_l, \gamma_l) \lesssim i(\gamma'_l, \gamma'_l)$ for all other parameterizations $\gamma'$ of $\gamma$.

% We also want to show a nowhere density result. However, for this, we need to be more precise about how the self-intersections of large subarcs of $\gamma \in \G^c$ grow with length. So we define

\subsection{Previous results for complete geodesics}
Complete geodesics on $\S$ satisfy the following dichotomy. On the one hand, when $X$ is a complete, finite volume metric, then $\Im \G = \S$. Even when $X$ has geodesic boundary, $\Im \G$ can have Hausdorff dimension greater than 1, and points of Lebesgue density. In the case of a pair of pants, this is proven in \cite{HJJL12}.

Moreover, when $X$ has finite volume, any ``typical'' geodesic in $\G$ has dense image in $\S$ in the following sense. Let $T_1\S$ be the unit tangent bundle of $\S$. Then we can choose a vector $v\in T_1\S$ at random with respect to Lebesgue measure, and consider the complete geodesic $\gamma$ with tangent vector $\gamma'(0) = v$. By mixing of the geodesic flow, $\Im \gamma$ will be dense with probability 1. Note that in this case, mixing of the geodesic flow also implies that the self-intersection function of $\gamma$ will grow asymptotically like $\kappa l^2$, with probability 1. 

On the other hand, the classical result of Birman and Series that we reference above shows that when $f(l) = K$ is constant, meaning that $\G(f)$ consists of complete geodesics with at most $K$ self-intersections, then $\Im \G(f)$ has Hausdorff dimension 1 and is nowhere dense \cite{BS85}. 

So we can think of self-intersection functions on a sliding scale. On one end of the scale, we have functions with $f(l) = O(1)$, and on the other side, we have functions with $f(l) = O(l^2)$. Theorems \ref{thm:HausDim} and \ref{thm:NWD} allow us to interpolate between these two extremes. They indicate that the transition from Hausdorff dimension 1 to Hausdorff dimension 2, and from being nowhere dense to being dense, occur at the far end of the scale, among functions with $f(l) = O(l^2)$.

\subsection{Contrast with results for closed geodesics}
\label{sec:TheoremsForClosed}

There is an analogous story for closed geodesics. Let $\G^c$ be the set of closed geodesics on $\S$. When $X$ has finite volume (and no boundary), $\Im \G^c$ is dense in $\S$ by the closing lemma and mixing of the geodesic flow. On the other hand, the set of simple closed geodesics has nowhere dense image by \cite{BS85}. 

Recently, Lenzhen and Souto have considered the sets of closed geodesics between these two extremes \cite{LS15}. In particular, for any function $f: \R \rightarrow \R$, they consider the set
\[
 \{ \gamma \in \G^c \ | \ i(\gamma, \gamma) \leq f(l(\gamma))\}
\]
They show that the image of this set is dense whenever $\lim_{l \rightarrow \infty} f(l)/l = \infty$, and that its lift to $T_1\S$ has Hausdorff dimension strictly smaller than 3 if $f(l) = o(l)$.

The first part of their result can be combined with our theorems to get the following corollary.
\begin{cor}[Consequence of \cite{LS15} and Theorem \ref{thm:HausDim}]
If $\lim_{l \rightarrow \infty} f(l)/l = \infty$, then $\Im \G(f)$ is dense. In particular, there is some $k_0 > 0$ so that for any $k < k_0$, $\Im \G(k^2 l^2)$ is dense, but does not have full Hausdorff dimension.
\end{cor}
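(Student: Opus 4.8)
The plan is to treat the two assertions separately. The ``does not have full Hausdorff dimension'' part is immediate from Theorem~\ref{thm:HausDim}: since $\mu(k)\to1$ as $k\to0$, fix $k_0>0$ with $\mu(k)<2$ for all $0<k<k_0$; for such $k$ the function $f(l)=k^2l^2$ satisfies $f(l)\le(kl)^2$, so $\dim_H\Im\G(k^2l^2)\le\mu(k)<2$. Since also $f(l)/l=k^2l\to\infty$, density of $\Im\G(k^2l^2)$ follows from the first assertion. So it remains to show: if $f(l)/l\to\infty$, then $\Im\G(f)$ is dense.

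Fix a nonempty open $U\subset\S$ and a point $x\in U$; I want a single $\gamma\in\G(f)$ meeting $U$. From \cite{LS15} I take, for each $N$, a closed geodesic $c_N$ with $i(c_N,c_N)\le f(l(c_N))$, of length $L_N:=l(c_N)\ge N$, passing within $1/N$ of $x$. I view each $c_N$ as a bi-infinite periodic geodesic $\R\to\S$ based at a point $p_N\in c_N$ still to be chosen, and pass to a subsequence so that $c_N\to\gamma$ in $C^\infty_{\mathrm{loc}}$, a complete geodesic (this requires no escape to infinity, which holds as the relevant part of $\S$ is compact). Because $L_N\to\infty$, the limit $\gamma$ is non-periodic; and, by a choice of $p_N$ made below, $\gamma$ will pass through $U$.

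The substance is to choose the $p_N$ so that $\gamma\in\G(f)$. By lower semicontinuity of the self-intersection number under $C^\infty_{\mathrm{loc}}$-limits of geodesic arcs, $i(\gamma_l,\gamma_l)\le\liminf_N i\big((c_N)_l,(c_N)_l\big)$, where $(c_N)_l$ is the length-$l$ subarc of $c_N$ centered at $p_N$; so I need $p_N$ with $i\big((c_N)_l,(c_N)_l\big)\le(1+o(1))f(l)$ for all $l\le L_N$, the $o(1)$ being as $N\to\infty$ after $l\to\infty$. An averaging argument gives a first bound: a uniformly random length-$l$ subarc of $c_N$ contains on average at most $i(c_N,c_N)\,l/L_N\le f(L_N)\,l/L_N$ self-intersections, so a union bound over dyadic $l\le L_N$ produces a basepoint good at every scale up to an $O(\log L_N)$ factor. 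Upgrading this to the bound above — the genuinely delicate part — is where I would invoke the internal ``mostly efficient'' structure of the \cite{LS15} curves together with the fact that the superlinear budget $f(L_N)$ grows fast enough, relative to $L_N$, to absorb the $O(\log L_N)$ and $l/L_N$ losses, given that $L_N$ may be taken as large as we please. Granting this, $\gamma\in\G(f)$ meets $U$, and since $U$ was arbitrary, $\Im\G(f)$ is dense.

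I expect this upgrade to be the main obstacle, and for a structural reason: the basepoint $p_N$ must simultaneously lie in a stretch of $c_N$ that looks simple at all scales $\le l$ (to control the self-intersection function) and lie near where $c_N$ approaches $x$ (so that the limit still meets $U$), and the original Birman--Series theorem says these requirements pull against each other — the low-complexity parts of geodesics cannot come near a typical point. The argument must therefore operate in the intermediate regime that a superlinear but subquadratic $f$ makes available, trading the $O(\log L_N)$ and $l/L_N$ savings against the ever-larger budget $f(L_N)$.
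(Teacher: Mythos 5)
Your treatment of the ``not full Hausdorff dimension'' half is exactly the paper's: pick $k_0$ with $\mu(k)<2$ for $k<k_0$ and apply Theorem~\ref{thm:HausDim}. But for the density half you have taken a substantially more complicated route than the paper, and it has a real gap which you partly acknowledge. The paper never passes to a limit, never chooses basepoints, and never invokes internal structure of the \cite{LS15} curves. The observation that makes the paper's proof two lines is this: a closed geodesic $c$ is \emph{already} an element of $\G$, viewed as the periodic complete geodesic $c:\R\to\S$ of period $L=l(c)$. The length-$l$ subarc $c_l$ then wraps around $c$ roughly $l/L$ times, so its transverse self-intersection count is controlled by $i(c,c)$ together with the wrap number, and comparing with $f(l)$ uses only that $L$ is a fixed constant once $c$ is fixed and $l\to\infty$. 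For $f(l)=k^2l^2$ in particular, if $i(c,c)\le f(L)=k^2L^2$, the wrap-around double points are on the order of $(l/L)^2\,i(c,c)\le k^2 l^2 = f(l)$, so $c\in\G(f)$ outright. Since \cite{LS15} provides such closed geodesics passing through any open set, $\Im\G(f)$ is dense with no further work.

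The gap in your version is concrete: your averaging bound gives that a random length-$l$ subarc of $c_N$ carries on the order of $i(c_N,c_N)\cdot l/L_N \le f(L_N)\,l/L_N$ double points, and you need this to be $\lesssim f(l)$; but that asks for $f(L_N)/L_N\lesssim f(l)/l$ with $l$ small and $L_N\to\infty$, which is exactly backwards when $f(l)/l$ is increasing — the superlinear hypothesis makes the inequality point the wrong way. That is why you are forced to appeal to unproven ``mostly efficient'' structure of the \cite{LS15} geodesics to fix the basepoint, and why you correctly sense that Birman--Series-type rigidity is pushing back. The paper avoids this tension entirely because it does not need a good basepoint: the periodic geodesic is a single global object, and its self-intersection function is read off from $i(c,c)$ and the period, not from a local window. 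In short, the missing idea is simply that $\G^c\subset\G$.
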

\begin{proof}
  Suppose  $\lim_{l \rightarrow \infty} f(l)/l = \infty$. This corollary follows from the fact that if $\gamma$ is a closed geodesic with $l(\gamma) = L$ and $i(\gamma, \gamma) \leq f(L)$, then $\gamma \in \G(f)$. 
  
  To see this, note that we can view any closed geodesic $\gamma$ as a complete geodesic $\gamma: \R \rightarrow \S$ parameterized by arclength. If $l(\gamma) = L$, then this parameterization has period $L$. We can define $\gamma_l$ as above to be the length $l$ subarc of $\gamma$ centered at $\gamma(0)$, where $\gamma_l$ is defined for any $l \in \R$. Then
  \[
   \frac{i(\gamma_l, \gamma_l)}{l} \lesssim \frac{f(L)}{L}
  \]
  Because $\lim_{l \rightarrow \infty} f(l)/l = \infty$, it is trivially true that $\lim_{l \rightarrow \infty} \frac{f(L)}{L} \cdot \frac{l}{f(l)} = 0$, since $L$ is a constant. Therefore $i(\gamma_l, \gamma_l) \lesssim f(l)$. In other words, $\gamma \in \G(f)$. 
  
  Thus, $\Im \G(f)$ contains a dense set by \cite{LS15}, and so it is dense.
\end{proof}

On the other hand, a closed geodesic $\gamma$ with $l(\gamma) = L$ and $i(\gamma, \gamma) \leq f(L)$ does not necessarily belong to $\G(f,L')$, if $L' < L$. In fact, to determine whether $\gamma \in \G(f,L')$, one would have to examine how the self-intersections of $\gamma$ are distributed along its length. So \cite{LS15} does not contradict Theorem \ref{thm:NWD}.

It is interesting to note that the transition away from full Hausdorff dimension occurs when $f(l) = (k_0 l)^2$ for complete geodesics, while it occurs around $f(l) = O(l)$ for closures of sets of closed geodesics.

\subsection{Reduction to closed surfaces}
It is enough to prove Theorems \ref{thm:HausDim} and \ref{thm:NWD} for closed surfaces $\S$ (without boundary). In fact, if $X$ has geodesic boundary, then we can double $\S$ across this boundary to get a $\S'$ with finite volume metric $X'$. There is a natural inclusion $\S \hookrightarrow \S'$ along which $X'$ pulls back to $X$. Any $\gamma \subset \S$ gets sent to a geodesic on $\S'$ with the same self-intersection function. So if $\Im \G(f)$ has Hausdorff dimension $h$, or is nowhere dense, on $\S'$, then the same is true on $\S$.

\subsection{Structure of the paper}
% The paper is organized as follows:

  In Section \ref{sec:ArcApprox}, we show how to approximate any complete geodesic $\gamma$ by a sequence of closed geodesics $\gamma_n$ that also approximate the self-intersection function of $\gamma$. In particular, given any geodesic arc $\alpha$ of length $L$, which can be thought of as a subarc of $\gamma$, we show how to find a nearby closed geodesic whose length and self-intersection number are not much larger than those of $\alpha$ (Lemma \ref{lem:ApproxArcs}).
 
  In Section \ref{sec:Covers}, we apply Lemma \ref{lem:ApproxArcs} to construct covers for $\G(f)$ and $\G(f,L)$ that consist of regular neighborhoods of closed geodesics. In particular, for each function $f$, we get a sequence of finite covers $\{\C_n\}$. In Lemma \ref{lem:FiniteCover}, we show that $\C_n$ covers $\G(f,L)$ for all $n$ large enough (depending on $L$.) Moreover, we show that any infinite union of these covers is, in fact, a cover for $\G(f)$ (Lemma \ref{lem:NestedCover}).
 
  In Section \ref{sec:CountingCoverSize}, we approximate the number of open sets in the cover $\C_n$, for each $n$. The set $\C_n$ is a collection of regular neighborhoods of closed geodesics that lie in a certain set.
%   , with certain length and self-intersection number bounds that depend only on $n$ and $X$. 
  So to approximate the size of $\C_n$, we need to approximate the size of this set of closed geodesics. We do this in Lemma \ref{lem:ClosedGeodesicGrowth}.
  
  In Sections \ref{sec:NWD} and \ref{sec:HausDimProof}, we prove Theorems \ref{thm:NWD} and \ref{thm:HausDim}, respectively, given the above lemmas. We do this by getting upper bounds the Lebesgue and Hausdorff measures of each cover $\C_n$, where the measure of a cover is defined to be the measure of the union of elements of that cover.

\subsection{Notation}
There are several points in this paper where we only need coarse estimates. We use the following notation. If two functions $A(x)$ and $B(x)$ satisfy $ A(x) \leq c B(x)$ where $c$ is a constant depending only on some quantity $D$, then we write
\[
A(x) \preccurlyeq B(x)
\]
and say that the constants depend only on $D$. We will also say that $A(x)$ is coarsely bounded by $B(x)$.

Furthermore, given two curves $\alpha$ and $\beta$, $i(\alpha, \beta)$ denotes the least number of self-intersections between all curves freely homotopic to $\alpha$ and $\beta$. On the other hand, $\#\alpha \cap \beta$ denotes the number of transverse intersections between the curves $\alpha$ and $\beta$ themselves. In particular, we use the notation $\#\alpha \cap \beta$ when $\alpha$ and $\beta$ are arcs rather than closed curves.

\section{Lemmas about arcs}
\label{sec:ArcApprox}

It is well-known that one can approximate any complete geodesic with a sequence of closed geodesics. For example, closed geodesics are dense in the space of geodesic currents, which also contain the set of complete geodesics \cite{Bon}. So given any complete geodesic $\gamma_\infty$, we can find a sequence $\{\gamma_i\} \subset \G^c$ so that $\lim \gamma_i = \gamma_\infty$. Note that this limit holds in the measure-theoretic sense of geodesic currents, but it must also hold as a Hausdorff limit of the geodesics themselves. See \cite{Bon} for more details.

Suppose $\gamma_\infty \in \G(f)$. Then not only do we want to approximate $\gamma_\infty$ by a sequence $\{\gamma_i\}$ of closed geodesics, we want the self-intersection numbers of the closed geodesics to eventually be coarsely bounded by the self-intersection function of $\gamma_\infty$. That is, if $l(\gamma_i) = l_i$, we want $i(\gamma_i, \gamma_i) \preccurlyeq f(l_i)$, where the constant is independent of $i$.

We make this precise in terms of subarcs of complete geodesics. In particular, given a point $x \in \Im \gamma_\infty$, we can take a nested sequence of subarcs $\alpha_l \subset \gamma_\infty$ centered at $x$ so that $\alpha_l$ has length $l$. Then the following lemma says we can find a closed geodesic close to $\alpha_l$ that does not have too much more length or to many more self-intersections.

\begin{defi}
 We say a closed geodesic $\gamma$ \textbf{$r$-fellow travels} a geodesic arc $\alpha$ if there are some lifts $\tilde \gamma$ and $\tilde \alpha$ of $\gamma$ and $\alpha$, respectively, to the universal cover $\tilde \S$ of $\S$ so that $\tilde \alpha$ lies in a $r$ -neighborhood of $\tilde \gamma$.
\end{defi}

Let 
\[
 \G^c_L(K) = \{ \gamma \in \G^c \ | \ l(\gamma) \leq L, i(\gamma,\gamma) \leq K\}
\]
be the set of closed geodesics with length at most $L$ and with at most $K$ self-intersections.

\begin{lem}
\label{lem:ApproxArcs}
There is a constant $d$ depending only on the metric $X$ so that the following holds. Let $\alpha$ be a geodesic arc of length $L \geq d$ with $\# \alpha \cap \alpha = K$. Then there exists a closed geodesic 
 \[
 \gamma \in \G^c_{2L}( K + dL)
 \] 
that $1$-fellow-travels $\alpha$.
\end{lem}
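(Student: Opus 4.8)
The plan is to close up the arc $\alpha$ by appending a short ``return path'' from its terminal endpoint back to its initial endpoint, then straighten the resulting loop to a geodesic and control both its length and its self-intersection number. First I would use the fact that the injectivity radius of $X$ is bounded below, together with the fact that $\S$ has bounded diameter $D$: let $p = \alpha(0)$ and $q = \alpha(L)$ be the endpoints of $\alpha$, and let $\tau$ be a geodesic arc from $q$ to $p$ of length at most $D$. The concatenation $\alpha * \tau$ is a closed loop based at $p$; let $\gamma$ be the closed geodesic freely homotopic to it. Since geodesic representatives minimize length in their free homotopy class, $l(\gamma) \leq l(\alpha) + l(\tau) \leq L + D \leq 2L$ once $L \geq d$ for $d$ chosen at least $D$. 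This gives the length bound in $\G^c_{2L}(\cdot)$.

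Next I would bound $i(\gamma,\gamma)$. A self-intersection of $\gamma$ corresponds (after homotoping $\gamma$ to a position close to $\alpha * \tau$) to a self-intersection of the loop $\alpha * \tau$, which is of one of three types: an intersection of $\alpha$ with itself, an intersection of $\tau$ with itself, or an intersection of $\alpha$ with $\tau$. The first type contributes at most $\#\alpha \cap \alpha = K$. The short arc $\tau$, having length at most $D$, can self-intersect only a bounded number of times, say at most $c_1 = c_1(X)$, by a standard bounded-geometry argument (a geodesic arc of length $\leq D$ in a surface with injectivity radius $\geq \varepsilon_0$ has at most $\preccurlyeq (D/\varepsilon_0)^2$ self-intersections). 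For the mixed type, I would bound $\#\alpha \cap \tau$: again by bounded geometry, $\tau$ lies in a bounded neighborhood of a region through which $\alpha$ passes a controlled number of times. More carefully, one can subdivide $\alpha$ into $\preccurlyeq L$ subarcs each of length comparable to the injectivity radius; each such subarc meets the fixed short arc $\tau$ in a bounded number of points, so $\#\alpha \cap \tau \preccurlyeq L$. Collecting these, $i(\gamma,\gamma) \leq K + c_1 + c_2 L \leq K + dL$ after absorbing $c_1$ into the linear term by enlarging $d$ (using $L \geq d$) and taking $d \geq c_2 + c_1$. Finally, the $1$-fellow-traveling claim follows because $\gamma$, being the geodesic representative of $\alpha * \tau$, stays within bounded Hausdorff distance of $\alpha*\tau$ in the universal cover by the Morse lemma / thin-triangles property of $\mathbb{H}^2$; choosing $\tau$'s contribution suitably and possibly rescaling $d$, one arranges the relevant lifts of $\gamma$ and $\alpha$ to be within distance $1$. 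The precise constant $1$ (as opposed to some $O(1)$) is a cosmetic normalization one can always achieve by requiring $L$ large, i.e.\ by enlarging $d$.

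The main obstacle I expect is the mixed-intersection estimate $\#\alpha \cap \tau \preccurlyeq L$ together with getting the fellow-traveling constant down to exactly $1$. The former requires being careful that $\tau$ can be chosen so that it does not wander through a region that $\alpha$ fills densely; this is where one genuinely uses that $\tau$ can be taken to be a \emph{geodesic} arc of length $\leq D$, so that it passes through at most $\preccurlyeq D$ of the injectivity-radius-sized balls, and $\alpha$ crosses each ball at most $\preccurlyeq L$ times. The latter is a matter of tracking constants through the Morse lemma: the quasigeodesic $\alpha * \tau$ has a geodesic representative within Hausdorff distance $R_0 = R_0(X)$, and then on the subarc corresponding to $\alpha$ (away from the short ``defect'' $\tau$) the distance can be taken smaller; demanding $L \geq d$ with $d$ large forces the fraction of the loop occupied by $\tau$ to be small, pushing the effective fellow-traveling distance along $\alpha$ below $1$. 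Everything else is bounded-geometry bookkeeping with constants depending only on $X$.
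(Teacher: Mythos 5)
Your high-level plan---close up $\alpha$ by a short return path $\tau$, straighten, and control length, intersections, and fellow-traveling---is the same plan the paper follows. The intersection counting (mixed count $\#\alpha\cap\tau \preccurlyeq L$, short-arc self-count $O(1)$) is fine and essentially matches the paper's Claim~\ref{cla:ArcIntersection}. But there is a genuine gap in the fellow-traveling step, and it is the heart of the lemma.

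Taking \emph{any} geodesic arc $\tau$ from $\alpha(L)$ to $\alpha(0)$ of length $\leq \mathrm{diam}(\S)$ gives no control on the angles at the two corners of the loop $\alpha * \tau$. A piecewise geodesic whose corner angle deficit is close to $\pi$ (i.e.\ which nearly backtracks) is \emph{not} a quasigeodesic with constants depending only on $X$, so the Morse lemma does not apply with a uniform $R_0(X)$; worse, the geodesic in the universal cover joining the endpoints of a nearly-backtracking ``V'' can be arbitrarily far from the corner. Since the corners are exactly the endpoints of $\alpha$, the straightened geodesic $\gamma$ need not pass anywhere near $\tilde\alpha$. The intuition that ``$\tau$ is a small fraction of the loop, so its effect is small'' is misleading: the defect is an angular, local phenomenon at the corners, not a length-fraction phenomenon, and making $L$ large does nothing to improve a bad corner angle.

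The missing idea is that $\tau$ must be chosen so that the angle deficits at both junctions are uniformly small (below some $\epsilon$ with $D(\epsilon)\leq 1$). The paper does this in Claim~\ref{cla:ClosingAlpha} via mixing of the geodesic flow: starting from the terminal tangent vector of $\alpha$, mixing produces a trajectory that returns $\epsilon$-close (in the unit tangent bundle) to the initial tangent vector within a uniformly bounded time $T(\epsilon)$, and the resulting closing arc $\beta$ meets $\alpha$ at both ends with angle deficit $<\epsilon$. This both fixes the fellow-traveling (now the corners can be rounded to a small-curvature curve and one can invoke the curvature-to-fellow-traveling estimate) and gives the length bound $l(\beta)\leq T(\epsilon)+2 =: R$ with $R$ depending only on $X$. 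Without this angle control your construction can fail to produce a geodesic that $1$-fellow-travels $\alpha$, so the proposal as written does not prove the lemma.
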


This Lemma is a direct consequence of Claims \ref{cla:ArcIntersection} and \ref{cla:ClosingAlpha} below. The proof of Lemma \ref{lem:ApproxArcs} given these claims is at the end of this section.

\begin{claim}
\label{cla:ClosingAlpha}
For any geodesic arc $\alpha$ with $l(\alpha) \geq 3$ there is a $\gamma \in \G^c$ so that $\gamma$ 1-fellow travels $\alpha$ and $l(\gamma) \leq l(\alpha) + R$, where $R$ is a constant depending only on $X$.
%  For any geodesic arc $\alpha$ of length at least 3, there is another geodesic arc $\beta$ so that $\alpha$ and $\beta$ can be concatenated into a closed curve
%  \[
%   \gamma' = \alpha \circ \beta
%  \]
%  where $l(\beta) \leq R$, for a constant $R$ that depends only on the metric $X$. Moreover, if $\gamma$ is the geodesic representative of $\gamma'$, then $\gamma$ 1-fellow travels $\alpha$.
 \end{claim}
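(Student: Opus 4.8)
The plan is to close up $\alpha$ by appending a short controlled arc joining its endpoints, and then straighten the resulting loop to a geodesic while keeping it close to $\alpha$. Let $p = \alpha(0)$ and $q = \alpha(l(\alpha))$ be the endpoints of $\alpha$, with unit tangent vectors $u$ at $p$ (pointing backward along $\alpha$) and $v$ at $q$ (pointing forward along $\alpha$). Since $\S$ is compact, the unit tangent bundle $T_1\S$ is compact, so there is a constant $\rho$, depending only on $X$, such that any two points of $T_1\S$ can be joined by a geodesic segment of length at most $\rho$; more to the point, I want to connect the \emph{tangent directions} so that the concatenation is close to a genuine geodesic loop. First I would pick a geodesic segment $\delta$ from $q$ to $p$ of length at most $\text{diam}(\S)$. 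The concatenation $\alpha * \delta$ is a piecewise-geodesic loop based at $p$ with at most one corner (well, two, at $p$ and $q$), of total length at most $l(\alpha) + \text{diam}(\S)$.

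Next I would replace this loop by the geodesic representative $\gamma$ in its free homotopy class. The key geometric input is that in a $\delta$-hyperbolic space (the universal cover $\tilde\S$), a piecewise geodesic path whose segments are long compared to the defect at the corners is a quasigeodesic, and by the Morse lemma its geodesic representative stays within bounded Hausdorff distance of it. Concretely: lift $\alpha * \delta$ repeatedly to $\tilde\S$ to get a bi-infinite path $\tilde P$ invariant under the deck transformation $g$ corresponding to $\gamma$; the axis $\tilde\gamma$ of $g$ is the geodesic $\tilde P$ fellow-travels. Since each lift of $\alpha$ has length $l(\alpha) \geq 3$ and the "bridging" lifts of $\delta$ have bounded length, the Morse-lemma constant is uniform, giving a fellow-traveling constant $r_0$ depending only on $X$. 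Finally, $l(\gamma) = $ translation length of $g$ $\leq l(\alpha * \delta) \leq l(\alpha) + \text{diam}(\S)$, so $R := \text{diam}(\S)$ works for the length bound.

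There is one real subtlety: the Morse lemma gives that $\tilde\gamma$ stays within $r_0$ of $\tilde P$, but I want the reverse containment along the $\alpha$-part specifically — that the lift $\tilde\alpha$ lies in a $1$-neighborhood of $\tilde\gamma$, not merely an $r_0$-neighborhood. To upgrade the constant to $1$, I would apply the preceding discussion not to $\alpha$ itself but to a longer arc: if $\alpha$ is a subarc of a longer geodesic arc $\alpha'$ (extend $\alpha$ a bounded amount in $\S$, possible since geodesics extend indefinitely), then fellow-traveling of $\alpha'$ at scale $r_0$ forces, by a standard thin-triangles/convexity estimate in $\h$, fellow-traveling of the middle portion $\alpha$ at a much smaller scale — the error decays exponentially as one moves into the interior of $\alpha'$ away from its endpoints. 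Choosing the extension length a uniform constant (depending only on $X$ and $r_0$, hence only on $X$) brings the constant below $1$ on $\alpha$. This interior-shrinking step is the main obstacle and the place where the hypothesis $l(\alpha) \geq 3$ (rather than just $l(\alpha) > 0$) is used, together with hyperbolicity of $X$; everywhere else the argument is soft and works in variable negative curvature, consistent with the remark in the introduction.

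I should also check that $\gamma$ is not a trivial or a proper power in a way that breaks "closed geodesic": if $\alpha * \delta$ is null-homotopic, then $\alpha$ and $\delta^{-1}$ are homotopic rel endpoints, forcing $l(\alpha) \leq l(\delta) \leq \text{diam}(\S)$, which is excluded once we note $3$ can be taken at least as large as $\text{diam}(\S)$ would require — if not, absorb this into the constant $R$ and the hypothesis by raising the threshold (the claim only needs \emph{some} constant, and $3$ can be replaced by $\max(3,\text{something depending on }X)$, then renamed; but as stated I will simply assume the threshold in the final Lemma \ref{lem:ApproxArcs} is $d \geq$ this quantity). In the degenerate case that $\gamma$ is a proper power, replace it by its primitive root, which only shortens it and still fellow-travels $\alpha$, so the conclusion is unaffected.
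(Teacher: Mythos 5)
The central gap is in the very first geometric step: you take $\delta$ to be an arbitrary geodesic segment of bounded length from $q$ to $p$, and then invoke the Morse lemma to conclude that the geodesic representative of $\alpha * \delta$ fellow-travels $\alpha$ within a uniform constant. But the Morse lemma only applies to paths that are quasigeodesics with controlled constants, and a piecewise geodesic is a uniform quasigeodesic only when the \emph{corner angles} are bounded away from $0$ (equivalently, the angle deficit is bounded away from $\pi$), with segment lengths long relative to that bound. Nothing in your construction controls the angle at $p$ or $q$: the shortest geodesic from $q$ back to $p$ can leave $q$ at an arbitrarily sharp angle with $\alpha$, nearly retracing it. In that regime the lifted path $\cdots\tilde\alpha\tilde\delta\tilde\alpha\tilde\delta\cdots$ zigzags rather than makes progress, the quasigeodesic constants blow up, and the axis of the associated deck transformation need not be anywhere near $\tilde\alpha$. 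Your degenerate-case check at the end only rules out $\alpha * \delta$ being null-homotopic; it does not rule out sharp angles with a nontrivial (but geometrically useless) closed-up curve, which is the real failure mode. The ``interior-shrinking'' step is a sound idea for tightening a fellow-traveling constant once you have one, but it cannot repair the absence of the uniform constant in the first place.

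The paper's proof addresses exactly this: it does not take an arbitrary closing arc, but instead uses mixing of the geodesic flow on $T_1\S$ to produce a closing arc $\beta$ of uniformly bounded length $R(\epsilon)$ whose concatenation with $\alpha$ has angle deficit at most $\epsilon$ at \emph{both} corners. This angle control is the essential ingredient and is the step that genuinely uses compactness of $\S$. With the corners nearly straight, the paper then rounds the corners to a piecewise-$C^2$ curve of small geodesic curvature $g(\epsilon)$ and invokes a result of Leininger to get fellow-traveling within $D(\epsilon)\to 0$; taking $\epsilon$ small enough makes $D(\epsilon)\le 1$ directly, with no need for an interior-shrinking argument. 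You would need to add an angle-control mechanism (whether via mixing, as in the paper, or some other device) before the Morse-lemma/fellow-traveling step of your argument can go through.
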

We would like to thank Chris Leininger for suggesting the idea for this claim and its proof.
\begin{proof}
 Suppose $\beta$ is a geodesic arc so that $\alpha$ and $\beta$ can be concatenated into a closed curve $\gamma'$. Then $\gamma'$ is a piecewise geodesic closed curve with corners at the endpoints of $\alpha$. Suppose the angle deficit at each corner is at most $\epsilon$ (Figure \ref{fig:AngleEpsilon}). 
 \begin{figure}[h!]
  \centering 
  \includegraphics{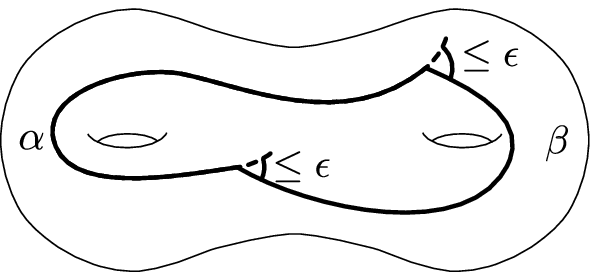}	
  \caption{}
  \label{fig:AngleEpsilon}
 \end{figure}
 Let $\gamma$ be the geodesic representative of $\gamma'$. Then $\gamma$ must $D(\epsilon)$- fellow travel $\gamma'$, for a function $D(\epsilon)$ depending only on the maximal angle deficit $\epsilon$ with 
  \[
  \lim_{\epsilon \rightarrow 0} D(\epsilon) = 0
  \]
 
 To see this, we will round the corners of $\gamma'$ to get a nearby, piecewise $C^2$ curve $\gamma_c$ that $\epsilon \sinh(1)$-fellow travels $\gamma$. The curve $\gamma_c$ will have geodesic curvature bounded by a function $g(\epsilon)$ at each point, where $\lim_{\epsilon \rightarrow 0} g(\epsilon) = 0$. So by \cite{Leininger06}, we can conclude that $\gamma$ must $f(\epsilon)$- fellow travel $\gamma_c$, where $f(\epsilon)$ is a continuous function in $\epsilon$, with $f(0) = 0$.
 
 First, if $l(\beta) \leq 2$, we need to modify $\gamma'$ slightly: Replace $\gamma'$ by the curve $\gamma''$ that is freely homotopic to it relative one of the endpoints of $\alpha$. There are lifts $\tilde \gamma'$ and $\tilde \gamma''$ to the universal cover $\tilde \S$ of $\S$ that form a geodesic triangle $\triangle abc$ where $c$ is the vertex opposite $\tilde \gamma''$ and has angle at least $\pi - \epsilon$ (Figure \ref{fig:Triangle}). Applying the hyperbolic law of sines, we see that the distance from $\tilde \gamma'$ to $\tilde \gamma''$ is at most $\epsilon \sinh(2)$.
 
%  Drop a perpendicular from $d$ to the segment $\overline{ab}$. If $h$ is the length of this perpendicular, then the hyperbolic law of sines says that $\frac{\sin(\angle dab)}{\sinh h} = \frac{1}{\sinh l(\beta)}$. Since $\angle adb \geq \pi - \epsilon$, we have that $\angle dab \leq \epsilon$. That, plus the fact that $l(\beta) \leq R$ implies $\sinh(h) \leq \sin(\epsilon) \sinh(R)$. Since $\sinh(x) \geq x$ and $\sin(x) \leq x$ for all $x$, we are done.
 
 \begin{figure}[h!]
  \centering 
  \includegraphics{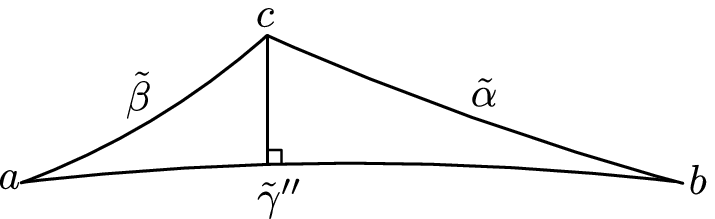}
  \caption{}
  \label{fig:Triangle}
 \end{figure}

 The rest of the proof is essentially the same, whether we deal with $\gamma'$ or $\gamma''$. So assume for what follows that $l(\beta) > 2$, so we deal with $\gamma'$. Take a bi-infinite lift $\tilde \gamma'$ of $\gamma'$ to $\tilde \S$. 
 
 The curve $\tilde \gamma'$ is a piecewise geodesic with angle deficit at most $\epsilon$ at its corners. We will now find a nearby piecewise $C^2$ curve $\tilde \gamma_c$ whose curvature is bounded above by $g(\epsilon)$ at each point, where $\lim_{x \rightarrow 0} g(x) = 0$. 
 
%  Then by Lemma 1 in the appendix of \cite{Leininger06}, the distance from $\tilde \gamma_c$ to $\tilde \gamma$ is at most $f(\epsilon)$, where $f(\epsilon)$ is a continuous function on $[0,1)$ with $f(0)=0$. (Note that \cite{Leininger06} only uses that the curve is piecewise $C_2$.)
 
 For this, we use the upper half plane model for $\h$. Since $X$ is a hyperbolic metric, we can view $\tilde \S$ as a subset of $\h$. Applying a hyperbolic isometry, we can assume that $\tilde \gamma'$ has a geodesic segment from some point $iy$ to the point $i$, and that it then turns by an angle $\theta < \epsilon$ and has a length 1 geodesic segment from $i$ to some point $b$. Note that this is possible since we assume that $l(\alpha), l(\beta) \geq 2$. Then the segment from $i$ to $b$ lies on a circle with center $a \in \R$ and radius $r$, where
 \[
  a = \frac{1}{\tan \theta}, \quad r = \frac{1}{\sin \theta}
 \]
 In particular, $b = a + re^{i(\pi - \theta - \Delta \theta)}$ where $\Delta \theta$ goes to zero as $\theta$ goes to zero (left side of Figure \ref{fig:DeltaC}).
 
%  -----------------------------
%  
%   In fact, the length of this circular segment is $\ln(\frac{\tan(\frac{\pi - \theta}2)}{\tan(\frac{\pi-\theta - \gamma'\theta}{2})}) = \ln(\frac{\tan(\frac{\theta + \gamma'\theta}{2})}{\tan(\frac{\theta}2)})$. This length is 1 if $\frac{\tan(\frac{\theta}{2} + \frac{\Delta \theta}{2})}{\tan(\frac{\theta}{2})} = e$. Thus, $\Delta \theta = 2\tan^{-1}(e \tan(\frac \theta 2)) - \theta$. 
%  
%  -----------------------------
 
 \begin{figure}[h!]
  \centering 
  \includegraphics{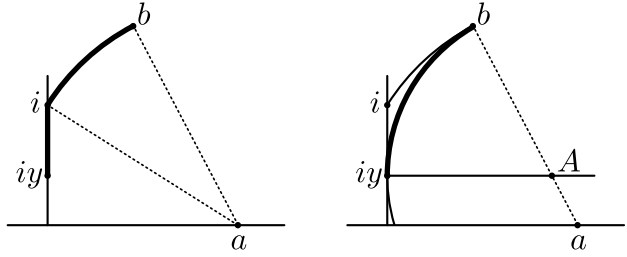}
  \caption{We replace the subarc from $iy$ to $b$ by a subarc of a circle centered at $A$.}
  \label{fig:DeltaC}
 \end{figure}

 We will replace the subarc from $iy$ to $b$ by a smooth arc with the same initial and final tangent vectors. As long as the distance from $iy$ to $i$ is at most 1, the fact that $l(\alpha), l(\beta) \geq 2$ means we can do this to both corners of $\gamma'$ at the same time to get the nearby $C^2$ curve $\gamma_c$.
 
 For each $\theta$, there is a unique $y = y(\theta)$ so that there is a Euclidean circle with center inside $\h$ that is tangent to $\tilde \gamma'$ at both $iy$ and at $b$. Moreover, we can compute that as $\theta$ goes to zero, $y(\theta)$ approaches $\frac{1}{2e-1}$. So for all $\epsilon$ small enough, the hyperbolic distance between $iy$ and $i$ is smaller than 1.
 
%  -----------------------------
%  
%      We find $y$ as follows. Suppose the center of the circle is a point $A$, and its radius is $R$. Then $y$ is the point so that $R = d_{\E}(A,iy) = d_{\E}(A,b)$, where $d_{\E}(\cdot, \cdot)$ is Euclidean distance. Drop a perpendicular from $A$ to a point $A'$ on the real axis. We have $\angle AaA' = \theta + \Delta \theta$. Because the circle is tangent to $\tilde \gamma'$ at $iy$, the line from $A$ to $iy$ is parallel to the real axis. Thus, $d_{\E}(A,iy) = d_{\E}(A', 0)$ and $d_{\E}(A,A') = y$. So we compute
%      \begin{align*}
%       d_{\E}(A,iy) & = d_{\E}(A',0) \\
%       & = d_{\E}(a, 0) - d_{\E}(a,A') \\
%       & = r - \frac{y}{\tan(\theta + \Delta \theta)} \\
%       & = \frac{1}{\tan(\theta)} - \frac{y}{\tan(\theta + \Delta \theta)}
%       \intertext{And,}
%       d_{\E}(A,b) & = d_{\E}(a, b) - d_{\E}(a,A)\\
%       & = r - \frac{y}{\sin(\theta + \Delta \theta)} \\
%       & = \frac{1}{\sin(\theta)} - \frac{y}{\sin(\theta + \Delta \theta)}
%      \end{align*}
%      Setting $d_{\E}(A,iy) = d_{\E}(A,b)$, we get that
%      \[
%       y = \frac{ \frac{1}{\tan \theta} - \frac{1}{\sin \theta}}{\frac{1}{\tan(\theta + \Delta \theta)} - \frac{1}{\sin(\theta + \Delta \theta)}}
%      \]
%      If we plug in $\Delta \theta = 2\tan^{-1}(e \tan(\frac \theta 2)) - \theta$ into this expression, we see that $y$ approaches $1/(2e-1)$ as $\theta$ goes to zero. 
%  
%  -----------------------------
 
 Suppose the circle has center at $A$ and Euclidean radius $\rho$. If it meets the real axis at angle $\phi$, then its hyperbolic curvature is $|\cos(\phi)|$ \cite[Lemma 3]{GR85}. So we see that its curvature is $\frac y \rho$ at each point. By explicitly computing $y$ and $\rho$, one can show that the curvature goes to zero as $\theta$ (and $\epsilon$) go to zero.

%  -----------------------------
%  
%    Since we know $d_{\E}(A,iy)$ in terms of $y$, and we have computed $y$, we will use that the radius of the cicle is
%    \[
%     \rho = \frac {1}{\tan(\theta)} - \frac{y}{\tan(\theta + \Delta \theta)}
%    \] 
%    
%    Now, 
%    \begin{align*}
%     \frac y \rho & = \left (\frac {1}{y\tan(\theta)} - \frac{1}{\tan(\theta + \Delta \theta)} \right)^{-1} \\
%               & = \frac{y\tan(\theta)\tan(\theta + \Delta \theta)}{\tan(\theta + \Delta \theta) - y\tan(\theta)} \\
%               & = \frac{y\tan(\theta)\tan(2\tan^{-1}(e \tan(\frac \theta 2)))}{\tan(2\tan^{-1}(e \tan(\frac \theta 2))) - y\tan(\theta)}
%    \end{align*}
%    Taking a limit as $\theta$ goes to zero, we see that $\frac y\rho$ goes to zero as well.
% 
%  -----------------------------

 We replace each corner of $\tilde \gamma'$ in this way. This gives us our piecewise $C_2$ curve $\tilde \gamma_c$ whose curvature at each point goes to zero uniformly as $\epsilon$ goes to zero. Note that $\tilde \gamma_c$ projects down to a piecewise $C^2$ closed curve $\gamma_c$ in $S$. By \cite{Leininger06}, the distance from $\tilde \gamma_c$ to $\tilde \gamma$ is at most $f(\epsilon)$, where $f(\epsilon)$ is a continuous function with $f(0) = 0$. 
 
 By construction, the distance from $\tilde \gamma_c$ to $\tilde \gamma'$ is at most $\epsilon \sinh(1)$, as the circle segment is contained in the geodesic triangle with vertices at $iy, i$ and $b$. Thus, for all $\epsilon$ small enough, the distance from $\tilde \gamma'$ to $\tilde \gamma$ is at most
 \[
  d(\tilde \gamma, \tilde \gamma') \leq \epsilon \sinh(\rho) + f(\epsilon)
 \]
 In particular, this distance approaches 0 as $\epsilon$ goes to 0. Note that if $l(\beta) < 2$, then the same argument implies that $d(\tilde \gamma, \tilde \gamma') \leq \epsilon (\sinh(\rho) + \sinh(2)) + f(\epsilon)$. This quantity also goes to zero with $\epsilon$. So in either case, there is a function $D(\epsilon)$ so that $\gamma$ must $D(\epsilon)$ fellow travel $\gamma'$, with $\lim_{\epsilon \rightarrow 0} D(\epsilon) = 0$.

 We now show the following: For any $\epsilon > 0$ there is an $R>0$ so that for any geodesic arc $\alpha$ there exists a geodesic arc $\beta$ so that $l(\beta) \leq R$ and we can form a closed curve $\gamma' = \alpha \circ \beta$ with angle deficit at most $\epsilon$ at each corner.
 
 Take a unit speed parameterization $\alpha : [0,L] \rightarrow \S$.  Let $v = \alpha'(L)$ be its tangent vector in the unit tangent bundle $T_1(\S)$ of $\S$. Let $f_t$ denote geodesic flow on $T_1(\S)$ for time $t$, and let $r_\theta$ denote rotation by angle $\theta \in [-\pi, \pi]$. 
 
 We get coordinates in a small neighborhood about $v$ by assigning each vector $z$ the triple $(\theta, t, \phi)$, where $z = r_\phi \cdot f_t \cdot r_\theta (v)$. Let $N(v,\epsilon)$ be the set of all vectors with coordinates $(\theta, t, \phi)$ so that $|\theta|, |\phi| < \frac \epsilon 2$ and $0 < t < \frac 12 inj(X)$, where $inj(X)$ denotes the injectivity radius of $X$. (Figure \ref{fig:EpsilonTNbhd}). 
 \begin{figure}[h!]
  \centering 
  \includegraphics{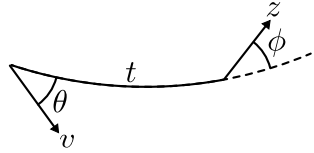}
  \caption{The set $N(v,\epsilon)$ contains all vectors $z$ with $|\theta|, |\phi| < \frac \epsilon 2$ and $t < \frac 12 inj(X)$.}
  \label{fig:EpsilonTNbhd}
  \end{figure}

 Set $w = \alpha'(0) \in T_1(\S)$. Fix any $\epsilon > 0$.  Since $N(v, \epsilon)$ is a set of positive Lebesgue measure, the mixing of the geodesic flow implies that there is some $z \in N(v, \epsilon)$ and $T > 0$ so that
 \[
 w \in N(f_T(z),\epsilon)
 \]
 (as in Figure \ref{fig:AlphaBeta}).
 
 Then if $z$ has coordinates $(\theta, t, \phi)$ near $v$ and $w$ has coordinates $(\theta', t', \phi')$ near $f_T(z)$, we can let $\eta$ be the piecewise geodesic arc whose lift to $T_1(\S)$ is given by the parameterization
 \[
  \eta'(t) = \left \{ 
  \begin{array}{ll}
   f_s \cdot r_\theta (v) & \text{ if } 0 \leq s < t\\
   f_{s-t} (z) & \text{ if } t \leq s < T + t\\
   f_{s-t -T} \cdot r_{\theta'} (f_T(z)) & \text{ if } T + t\leq s \leq T + t + t'
  \end{array}
  \right .
 \]
 (Figure \ref{fig:AlphaBeta}.) That is, we start at $v$ and flow in the direction $r_\theta(v)$ for time $t$. Then we flow in the direction of $z$ for time $T$, and lastly, in the direction of $r_{\theta'}(f_T(z))$ for time $t'$. Note that $r_\theta(v)$ and $z$ lie over the same point in $\S$, as do $f_T(z)$ and $r_{\theta'}(f_T(z))$. Thus, this defines a closed curve in $\S$.
 \begin{figure}[h!]
  \centering 
  \includegraphics{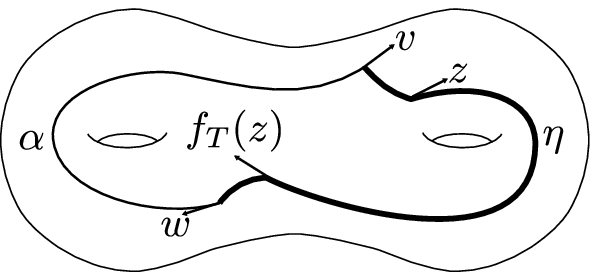}
  \caption{}
  \label{fig:AlphaBeta}
 \end{figure}

Let $\beta$ be the arc freely homotopic to $\eta$ relative its endpoints. Then $|\theta|, |\theta'|, |\phi|, |\phi'| < \frac \epsilon 2$ implies that the angle deficit at each of the two points where $\alpha$ meets $\beta$ is at most $\epsilon$. By the triangle inequality, $l(\beta) \leq T + 2$.

 In fact, there is a continuous function $T(\cdot, \cdot, \cdot) : T_1(\S) \times T_1(\S) \times \R^+ \rightarrow \R^+$ so that $T \leq T(v,w,\epsilon)$. 
Moreover, because $T_1(\S)$ is compact, there is a continuous function $T : \R^+ \rightarrow \R^+$ so that
\[
 T(v,w,\epsilon) \leq T(\epsilon)
\]
This is the only place where we use that $\S$ is compact. In particular, for any $\alpha$ and $\epsilon$, there is an arc $\beta$ of length at most $R(\epsilon) = T(\epsilon) + 2$ so that we can concatenate $\alpha$ and $\beta$ into a closed curve with angle deficit at most $\epsilon$ at its corners.

Choose $\epsilon$ small enough so that $D(\epsilon) \leq 1$, where $D(x)$ is the function we defined in the first part of this proof. Let $R = R(\epsilon)$. Then for every arc $\alpha$, there is a closed geodesic $\gamma$ so that
\[
 l(\gamma) \leq l(\alpha) + R
\]
and $\gamma$ 1-fellow travels $\alpha$.
\end{proof}

The previous claim says we can approximate any arc $\alpha$ by a closed geodesic $\gamma$ of roughly the same length. The next claim allows us to estimate the self-intersection number of $\gamma$.

\begin{claim}
\label{cla:ArcIntersection}
 If $\alpha$ and $\beta$ are two geodesic arcs with $l(\alpha) \leq L_\alpha$ and $l(\beta) \leq L_\beta$, then
 \[
  \# \alpha \cap \beta \leq \kappa L_\alpha L_\beta
 \]
 where we require  $L_\alpha, L_\beta \geq 1$, and $\kappa$ is a constant that depends only on $X$.
\end{claim}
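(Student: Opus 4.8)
The plan is to reduce the claim to the case of arcs of length at most $1$ and then count intersections in the universal cover. Since $L_\alpha, L_\beta \geq 1$, I would first subdivide $\alpha$ into $N_\alpha := \lceil L_\alpha \rceil \leq 2L_\alpha$ consecutive geodesic subarcs $\alpha_1, \dots, \alpha_{N_\alpha}$, each of length $L_\alpha/N_\alpha \leq 1$, and likewise $\beta$ into $N_\beta \leq 2L_\beta$ subarcs $\beta_1, \dots, \beta_{N_\beta}$ of length at most $1$. Every transverse intersection of $\alpha$ with $\beta$ is a transverse intersection of some $\alpha_i$ with some $\beta_j$, so $\#\alpha \cap \beta \leq \sum_{i,j} \#\alpha_i \cap \beta_j$. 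Hence it is enough to find a constant $C$, depending only on $X$, with $\#\alpha' \cap \beta' \leq C$ for any two geodesic arcs $\alpha', \beta'$ of length at most $1$; the claim then holds with $\kappa = 4C$.

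For this short-arc estimate I would pass to the universal cover $p : \tilde\S \to \S$, regarding $\tilde\S$ as a convex subset of $\h$ as in the proof of Claim \ref{cla:ClosingAlpha}. Fix a lift $\tilde\alpha'$ of $\alpha'$ and a reference lift $\tilde\beta'$ of $\beta'$, so that every lift of $\beta'$ has the form $g\tilde\beta'$ for a unique $g \in \pi_1(\S)$. To each transverse intersection point of $\alpha'$ and $\beta'$, associate the element $g$ for which $g\tilde\beta'$ passes through the lift of that point lying on $\tilde\alpha'$. This assignment is injective: if two distinct transverse intersections produced the same $g$, then the geodesic segments $\tilde\alpha'$ and $g\tilde\beta'$ would meet transversally at two distinct points of $\h$, which is impossible (the degenerate sub-cases, in which the two intersection points coincide along one of the arcs, are excluded because $\tilde\alpha'$ and $g\tilde\beta'$ are embedded). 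Therefore $\#\alpha' \cap \beta'$ is at most the number of $g \in \pi_1(\S)$ such that $g\tilde\beta'$ meets $\tilde\alpha'$.

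I would bound this last quantity by a packing argument in $\h$. Fix endpoints $\tilde x$ of $\tilde\beta'$ and $\tilde y$ of $\tilde\alpha'$. If $g\tilde\beta'$ meets $\tilde\alpha'$, a point of the intersection lies within distance $l(\alpha') \leq 1$ of $\tilde y$ and within distance $l(\beta') \leq 1$ of $g\tilde x$, so $d(\tilde y, g\tilde x) \leq 2$. The orbit $\{\, g\tilde x : g \in \pi_1(\S)\,\}$ is $\delta$-separated with $\delta := \mathrm{sys}(X) > 0$, since for $g \neq 1$ the distance $d(\tilde x, g\tilde x)$ is at least the translation length of $g$, hence at least the length of the shortest closed geodesic of $X$. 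Thus the balls $B(g\tilde x, \delta/2)$ are pairwise disjoint and all contained in $B(\tilde y, 2 + \delta/2)$, and comparing hyperbolic areas shows the number of admissible $g$ is at most $C := \mathrm{Area}_{\h}\big(B(2 + \delta/2)\big) / \mathrm{Area}_{\h}\big(B(\delta/2)\big)$, a constant depending only on $X$. Combining the three steps yields $\#\alpha \cap \beta \leq 4C\, L_\alpha L_\beta$.

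The routine-but-delicate point — and the only place I anticipate needing care — is the packing step: one must check that $\delta$, and hence $C$, depends only on the metric $X$ (including when $\S$ has geodesic boundary, where $\pi_1(\S)$ still acts freely and properly discontinuously on the convex set $\tilde\S \subset \h$, with translation lengths bounded below by $\mathrm{sys}(X)$), and that the set $\{\, g \in \pi_1(\S) : g\tilde\beta' \cap \tilde\alpha' \neq \emptyset \,\}$ is genuinely finite and correctly bounded by the area ratio. There should be no obstacle beyond this bookkeeping; the geometric heart of the argument is simply that \emph{two distinct geodesics of $\h$ cross at most once}, which is exactly what turns the linear-in-length subdivision into the claimed product bound.
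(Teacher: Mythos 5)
Your proof is correct, but it takes a genuinely different route from the paper's. The paper adapts Basmajian's argument: it fixes a pants decomposition, cuts each pair of pants into two right-angled hexagons, decomposes $\alpha$ and $\beta$ into maximal subarcs (``segments'') contained in a single hexagon, observes that two segments in a convex hexagon cross at most once, and cites \cite{Basmajian13} for the estimate that any three consecutive full segments have total length bounded below by a constant $C = C(X)$, yielding $N_\alpha \leq 2 + 3l(\alpha)/C$ and hence $\#\alpha\cap\beta \leq N_\alpha N_\beta \preccurlyeq L_\alpha L_\beta$. You instead subdivide $\alpha$ and $\beta$ uniformly into at most $2L_\alpha$ and $2L_\beta$ subarcs of length at most $1$, and then bound the number of intersections between two length-$\leq 1$ arcs by a constant $C(X)$ via a packing argument in the universal cover: intersections of $\alpha'$ with $\beta'$ inject into the set of deck transformations $g$ with $g\tilde\beta' \cap \tilde\alpha' \neq \emptyset$ (two distinct geodesics of $\h$ meet at most once), and that set is bounded by the area ratio $\mathrm{Area}(B(2+\delta/2))/\mathrm{Area}(B(\delta/2))$ where $\delta = \mathrm{sys}(X) > 0$. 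Both arguments give the product bound with constants depending only on $X$ (through the pants-decomposition geometry in the paper's case, through the systole in yours); yours is more self-contained, replacing the cited segment-length estimate with an elementary separation/packing bound. One small sharpening of your write-up: the map from intersections to group elements should be defined on parameter pairs $(s,t)$ with $\alpha'(s)=\beta'(t)$, taking $\tilde p = \tilde\alpha'(s)$, rather than on image points of $\S$, since $\alpha'$ need not be embedded when the injectivity radius is small; your injectivity argument (cases $s_1\neq s_2$ vs.\ $s_1=s_2$) already handles this correctly once the domain is phrased that way.
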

\begin{proof}
 The proof is almost exactly the same as the proof of \cite[Theorem 1.1]{Basmajian13}, where Basmajian proves this result in the case where $\alpha$ and $\beta$ are the same (non-simple) closed geodesic. We recreate it here for completeness. 
 
 Take a pants decomposition $\Pi$ of $\S$. Further cut each pair of pants into two congruent right-angled hexagons. So each hexagon has boundary edges that lie on curves in $\Pi$, and seam edges that join curves in $\Pi$. 
 
 The hexagon decomposition cuts the arc $\alpha$ into segments, which are maximal subarcs that lie in a single hexagon, and the same is true for $\beta$. Note that if a hexagon $h$ has $n$ $\alpha$-segments and $m$ $\beta$-segments, then the total number of intersections between $\alpha$ and $\beta$ in $h$ is at most $nm$. This is because each hexagon is simply connected and convex, so any pair of segments intersects at most once. Therefore, if $\alpha$ has $N_\alpha$ total segments, and if $\beta$ has $N_\beta$ total segments, then
 \[
  \# \alpha \cap \beta \leq N_\alpha N_\beta
 \]

 So we just need to bound $N_\alpha$ and $N_\beta$ in terms of $l(\alpha)$ and $l(\beta)$, respectively. We will say a \textbf{full segment} is any segment of $\alpha$ or $\beta$ that does not contain an endpoint of that arc. Take three consecutive full segments $\sigma_1, \sigma_2, \sigma_3$ of $\alpha$. Then by the argument in \cite[Step 2, Section 3]{Basmajian13}, we have that
 \[
  l(\sigma_1) + l(\sigma_2) + l(\sigma_3) \geq C
 \]
 where $C$ depends only on the metric $X$. The total length of all full segments of $\alpha$ is at most $l(\alpha)$. Thus, its number of full segments is at most $\frac{3l(\alpha)}{C}$. The arc $\alpha$ only has two segments that are not full: they are the ones that contain its endpoints. So we have
 \[
  N_\alpha \leq 2 + \frac{3l(\alpha)}{C}
 \]
 and likewise,
 \[
  N_\beta \leq 2 + \frac{3l(\beta)}{C}
 \]
 Therefore,
 \[
  \# \alpha \cap \beta \leq \left (2 + \frac{3l(\alpha)}{C} \right ) \left (2 + \frac{3l(\beta)}{C} \right )
 \]

 If $l(\alpha) \leq L_\alpha$ and $l(\beta) \leq L_\beta$, and if we assume $L_\alpha, L_\beta \geq 1$, we have that 
 \[
  \# \alpha \cap \beta \leq (2 + \frac 3C)^2 L_\alpha L_\beta
 \]
 Thus, the claim holds for $\kappa = (2 + \frac 3C)^2$, which is a constant depending only on $X$.
\end{proof}

\begin{proof}[Proof of Lemma \ref{lem:ApproxArcs}]
Assume that $l(\alpha) > 3$. Then Claim \ref{cla:ClosingAlpha} says there is a closed geodesic $\gamma$ that 1-fellow travels $\alpha$. By construction, $\gamma$ is freely homotopic to a concatenation $\alpha \circ \beta$, where $\beta$ is a geodesic arc of length at most $R$, for $R$ depending only on the metric $X$. So Claim \ref{cla:ArcIntersection} allows us to estimate the self-intersection number of $\gamma$. 

Assuming $R \geq 1$, Claim \ref{cla:ArcIntersection} implies that
\[
 \# \alpha \cap \beta \leq \kappa R l(\alpha)
\]
and 
\[
 \# \beta \cap \beta \leq \kappa R^2
\]
Thus, if we assume $l(\alpha) < L$, 
\begin{align*}
 i(\gamma, \gamma) & \leq i(\alpha, \alpha) + i(\alpha, \beta) + i(\beta,\beta)\\
                   & \leq i(\alpha, \alpha) + \kappa R l(\alpha) + \kappa R^2\\
                   & \leq K + \kappa R (L + R)
\end{align*}
So if $L \geq R$, then $i(\gamma, \gamma) \leq K + 2 \kappa R L$. Setting $d = 2 \kappa R$, we get Lemma \ref{lem:ApproxArcs}.
\end{proof}

\section{Covering by neighborhoods of closed curves}
\label{sec:Covers}
The proofs of Theorems \ref{thm:HausDim} and \ref{thm:NWD} come from covering our sets of complete geodesics with neighborhoods of closed geodesics. For any function $f(l)$, we give an infinite collection of finite open covers $\{\C_n(f) = C_n\}$. Then for any $L$, $\C_n$ is a cover of $\Im \G(f,L)$ for all $n$ large enough. Moreover, the union of any infinite subsequence over these covers gives a cover of $\Im \G(f)$. 

% To prove the Hausdorff dimension result, it is enough for these covers to be infinite: we give a collection of infinite nested covers for $\Im \G(f)$ in Lemma \ref{lem:NestedCover}. However, nowhere density requires finite covers: we give a collection of shrinking finite covers for $\Im \G(f,L)$ in Lemma \ref{lem:FiniteCover}.

Specifically, for each $\gamma \in \G^c$, let $N_\epsilon(\gamma)$ be an $\epsilon$-neighborhood of $\Im \gamma$. Then if $\H \subset \G^c$ is any collection of closed geodesics, we let
   \[
    N_\epsilon (\H ) = \bigcup_{\gamma \in \H} N_\epsilon(\gamma)
   \]
We define the cover $\C_n$ by 
\[
 \C_n = N_{\epsilon(n)} \left ( \G^c_n(c_X \cdot f( n))  \right )
\]
where $\epsilon(n) = 2e^{-n/4}$, and $c_X = 2 + d/2$, for the constant $d$ defined in Lemma \ref{lem:ApproxArcs}. That is, $\C_n$ is a finite collection of $\epsilon(n)$-neighborhoods of closed geodesics of length at most $n$, with at most $c_X f(n)$ self-intersections.

\subsection{Finite covers}
Recall that
% To prove nowhere density, we need to gain more control over the self-intersection function of geodesics in $\G(f)$. So recall that
\[
 \G(f,L) = \{ \gamma \in \G \ | \ \# \gamma|_{[a,a+l]} \cap \gamma|_{[a,a+l]} \leq f(l), \forall l \geq L\}
\]
In other words, this is the set of complete geodesics $\gamma$ so that all length $l$ subarcs have self-intersection number at most $f(l)$, for all $l \geq L$. Because we impose this regularity on the self-intersection function of geodesics in $\Im \G(f,L)$, we can show that each $\C_n$ is a cover of $\G(f,L)$, as long as $n$ is large enough.

Observe that if $f(x) \leq g(x)$, then $\G(f) \subset \G(g)$, and in fact, $\G(f,L) \subset \G(g,L)$ for all $L$. So in this section, we assume without loss of generality that $f(l) \geq l$ for all $l$.

% We had to cover $\Im \G(f)$ with a sequence of nested covers because we did not have enough control over the self-intersection function of geodesics in $\G(f)$. On the other hand, r This greater control allows us to cover $\Im \G(f,L)$ by neighborhoods of finite collections of closed geodesics. 

% Note that we do not have this sort of control over geodesics in $\G(f)$: all subarcs will eventually have self-intersection function $f(l)$, but if, for example, we consider all subarcs centered at time $t$, there is no control over when a self-intersection function of $f(l)$ will be achieved.

% As the size of these finite collections grows, the volume of the neighborhoods will shrink, proving that $\Im \G(f,L)$ is nowhere dense for all $L$, provided that $f(l) = o(l^2)$.

\begin{lem}
 \label{lem:FiniteCover}
 Suppose $f(l) \geq l$ for all $l$. Then for each $L$, there is an $N > 0$ so that for all $n \geq N$, $\C_n$ is a cover of $\Im \G(f,L)$.
 \end{lem}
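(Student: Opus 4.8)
The plan is to reduce to a single point: fix $x\in\Im\G(f,L)$ and show that $x\in\C_n$ for every $n$ beyond a threshold $N=N(L,X)$ that does not depend on $x$; this gives $\Im\G(f,L)\subseteq\C_n$ for all $n\ge N$. Choose a complete geodesic $\gamma\in\G(f,L)$ with $x\in\Im\gamma$ and reparametrize by arclength so that $\gamma(0)=x$. For large $n$ I will feed to Lemma~\ref{lem:ApproxArcs} the subarc $\alpha=\gamma|_{[-n/4,\,n/4]}$ of length $n/2$ centered at $x$. The one property of $\gamma\in\G(f,L)$ we use is that, once $n\ge L$, the length-$n$ subarc $\gamma|_{[-n/2,n/2]}$ has at most $f(n)$ transverse self-intersections; since $\alpha$ sits inside it, $\#\alpha\cap\alpha\le f(n)$ as well. (Bounding by $f(n)$ through the larger subarc, rather than by $f(n/2)$, means we never need $f$ to be monotone.)

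Next I apply Lemma~\ref{lem:ApproxArcs} to $\alpha$: for $n\ge 2d$ we have $l(\alpha)=n/2\ge d$, so there is a closed geodesic $\gamma'$ that $1$-fellow-travels $\alpha$ and satisfies
\[
 l(\gamma')\le 2\cdot\tfrac n2=n,\qquad i(\gamma',\gamma')\le\#\alpha\cap\alpha+d\cdot\tfrac n2\le f(n)+\tfrac d2\,f(n)=\bigl(1+\tfrac d2\bigr)f(n),
\]
the last step using the standing hypothesis $f(l)\ge l$, so $n\le f(n)$. Since $c_X=2+\tfrac d2$, this puts $\gamma'$ in $\G^c_n(c_X f(n))$, so $N_{\epsilon(n)}(\gamma')$ is one of the open sets making up $\C_n$.

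It remains to check that $x$ actually lies in $N_{\epsilon(n)}(\gamma')$, and this is the step with genuine content. Lift to $\tilde\S=\h$ so that some lift $\tilde\alpha$ lies in the $1$-neighborhood of a bi-infinite lift $\tilde\gamma'$ of $\gamma'$, with $\tilde x=\tilde\alpha(0)$ the midpoint of $\tilde\alpha$. Along the geodesic $\tilde\alpha$ the function $v(t)=\sinh d(\tilde\alpha(t),\tilde\gamma')$ solves $v''=v$ off the (at most one) point where $\tilde\alpha$ meets $\tilde\gamma'$, hence on each such interval $v(t)=Ae^{t}+Be^{-t}$ with $A,B\ge0$; the endpoint bounds $d(\tilde\alpha(\pm n/4),\tilde\gamma')\le1$ force $A,B=O(e^{-n/4})$, whence $d(x,\Im\gamma')\le d(\tilde x,\tilde\gamma')=O(e^{-n/4})$. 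Since $\epsilon(n)=2e^{-n/4}$, this is at most $\epsilon(n)$ once $n$ passes a threshold depending only on $X$ — which is exactly the balance the choices $\epsilon(n)=2e^{-n/4}$ and $c_X=2+d/2$ are calibrated for. Thus $x\in N_{\epsilon(n)}(\gamma')\subseteq\C_n$, and taking $N$ to be the maximum of the finitely many thresholds met above (all depending only on $L$ and the metric $X$) completes the argument.

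The main obstacle is the quantitative hyperbolic-geometry estimate — the exponential decay of the distance from the midpoint of a fellow-traveling geodesic arc to the geodesic it tracks — and then reconciling its universal constant with the prescribed $\epsilon(n)$ and $c_X$. If one wants additional room for that constant, invoke Claim~\ref{cla:ClosingAlpha} directly and use the longer subarc $\alpha=\gamma|_{[-(n-R)/2,\,(n-R)/2]}$: the closing construction yields $\gamma'\simeq\alpha\circ\beta$ with $l(\beta)\le R$, so $l(\gamma')\le n$, and by Claim~\ref{cla:ArcIntersection} together with $d=2\kappa R$ one gets $i(\gamma',\gamma')\le f(n)+\kappa Rn+\kappa R^2\le(2+\kappa R)f(n)=c_X f(n)$ for $n$ large, while the same midpoint estimate now gives $d(x,\Im\gamma')=O(e^{-n/2})$, comfortably below $\epsilon(n)$. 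Everything else — the intersection bounds, tracking lengths, and extracting a uniform threshold — is routine bookkeeping built on the two Claims of Section~\ref{sec:ArcApprox}.
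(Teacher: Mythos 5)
Your proposal follows essentially the same path as the paper's proof: fix $x$ and a complete geodesic $\gamma\in\G(f,L)$ through it, extract the length-$(n/2)$ subarc $\alpha$ of $\gamma$ centered at $x$, apply Lemma~\ref{lem:ApproxArcs} to produce a closed geodesic $\gamma'\in\G^c_n(c_X f(n))$ that $1$-fellow-travels $\alpha$, and then show $x$ lies within $\epsilon(n)$ of $\gamma'$ by quantifying how much closer the midpoint of a fellow-traveling geodesic segment is to the shadowing geodesic than the endpoints are. The one genuine variation is your mechanism for that last step: the paper drops perpendiculars and invokes a Lambert quadrilateral, whereas you note that $v(t)=\sinh d(\tilde\alpha(t),\tilde\gamma')$ solves $v''=v$ away from a possible crossing. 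That is a clean alternative, but your claim that $A,B\ge0$ ``on each such interval'' fails in the intersecting case: on the subinterval $[u_0,n/4]$ past a crossing $u_0$ one has $v(u)=C\sinh(u-u_0)$, whose $e^{-u}$-coefficient is negative. The estimate $v(0)=O(e^{-n/4})$ nonetheless survives — the far-endpoint constraint $C\sinh(n/4-u_0)\le\sinh 1$ together with $n/4-u_0\ge n/4$ gives $v(0)=C\sinh|u_0|\le \sinh(1)\sinh|u_0|/\sinh(n/4-u_0)=O(e^{-n/4})$ — but this case should be argued separately. On the plus side, your choice to bound $\#\alpha\cap\alpha$ through the length-$n$ subarc avoids any monotonicity hypothesis on $f$, which the paper's proof (it applies Lemma~\ref{lem:ApproxArcs} to a length-$l$ arc and then tacitly compares $f(l)$ with $f(2l)$) quietly relies on; that is a genuine tightening. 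Finally, a harmless caveat inherited from the paper: the sharp bound is $\sinh(1)/\cosh(n/4)$, which is not below $2e^{-n/4}$ for large $n$ (the paper's ``$\cosh(l/2)>e^{l/2}$'' is false), so the claim that the distance drops below $\epsilon(n)=2e^{-n/4}$ does not hold verbatim; enlarging the constant in $\epsilon(n)$ to, say, $4$ fixes this throughout with no effect on the theorems.
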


 \begin{proof}
 Let $x \in \Im \G(f,L)$. Then there is a $\gamma \in \G(f,L)$ parameterized by $\gamma : \R \rightarrow \S$ so that $x = \gamma(t_x)$ for some time $t_x \in \R$. Choose $l \geq L$. Let $\alpha_{x,l} = \gamma|_{[t_x - \frac l 2, t_x + \frac l2]}$ be the length $l$ subarc of $\gamma$ centered at $x$. Then because $\gamma \in \G(f,L)$,
 \[
  \# \alpha_{x,l} \cap \alpha_{x,l} \leq f(l)
 \]
 
By Lemma \ref{lem:ApproxArcs}, as long as $l \geq d$, there is a closed geodesic $\delta \in \G^c_{2l}(f(l)+dl)$ that 1-fellow travels $\alpha_{x,l}$, where $d$ is a constant depending only on $x$. Since $f(l) \geq l$, we have, in fact, that $\delta \in \G^c_{2l}(c_X f(l))$, where $c_X = 1+ d$.

Lift $\delta$ and $\alpha_{x,l}$ to curves $\tilde \delta$ and $\tilde \alpha_{x,l}$ in the universal cover such that the endpoints of $\tilde \alpha_{x,l}$ are at most distance 1 away from $\tilde \delta$. The midpoint of $\tilde \alpha_{x,l}$ is a lift $\tilde x$ of $x$. Thus,
\[
 d(\tilde x, \tilde \delta) < 2 e^{-l/2}
\]
To see this, drop perpendiculars from $\tilde x$ and from an endpoint of $\tilde \alpha_{x,l}$ down to $\tilde \delta$. This forms a quadrilateral with 3 right angles (called a Lambert quadrilateral.) By properties of Lambert quadrilaterals, $d(\tilde x, \tilde \delta) \leq \sinh(1)/ \cosh(\frac l 2)$. Since $\sinh(1) < 2$ and $\cosh(\frac l 2) > e^{l/2}$, we have our inequality (Figure \ref{fig:Lambert}).
\begin{figure}[h!]
 \centering 
 \includegraphics{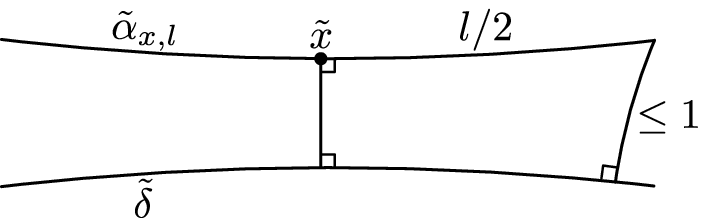}
 \caption{}
 \label{fig:Lambert}
\end{figure}

Let $N = \max\{2L, d\}$. Then for each $n \geq N$, and for each $x \in \Im \G(f,L)$, there is a $\delta \in \G^c_n(c_X f(n))$ so that $x \in N_{\epsilon(n)}(\delta)$ for $\epsilon(n) = 2 e^{-n/4}$. In other words, for all $n \geq N$, $\C_n$ is a cover of $\Im \G(f, L)$.
% Choose $l_0 > \max \{ L, R\}$. Then for any $l \geq l_0$, we have $\delta \in \G^c(2l, df(2l))$ for a modified constant $d$. Since $l_0$ is independent of the point $x$, we have the lemma.
\end{proof}

\subsection{Infinite covers}
To cover $\Im G(f)$, we need to take the union of infinitely many covers in the sequence $\{\C_n\}$. Once again, observe that if $f(l) \leq g(l)$ for two functions $f$ and $g$, then $\G(f) \subset \G(g)$. So we can assume without loss of generality that $f(l) \geq l$ for all $l$, and that $f(l)$ is increasing in $l$.

\begin{lem}
\label{lem:NestedCover}
Suppose $f(l)$ is an increasing function so that $f(l) \geq l$. Then for each $N >0$, $\bigcup_{n = N}^\infty \C_n $ is a cover of $\Im \G(f)$.
%  where $\epsilon(l) = 2e^{-n/2}$.
\end{lem}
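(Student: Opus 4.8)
The plan is to show that an arbitrary $\gamma_\infty \in \G(f)$ and an arbitrary point $x \in \Im\gamma_\infty$ lie in some $\C_n$ with $n \geq N$; since $N$ is fixed in advance, this requires producing arbitrarily large admissible values of $n$. So I would start by fixing $\gamma_\infty \in \G(f)$, parameterized by arclength so that $i((\gamma_\infty)_l, (\gamma_\infty)_l) \lesssim f(l)$, and fixing a point $x = \gamma_\infty(t_x)$ on it. For each $l$, let $\alpha_{x,l}$ be the length-$l$ subarc of $\gamma_\infty$ centered at $x$. The key observation is that $\#\alpha_{x,l}\cap\alpha_{x,l} \leq i(\alpha_{x,l},\alpha_{x,l}) + (\text{small error})$; in fact, since $\alpha_{x,l}$ is a geodesic subarc of $\gamma_\infty$, its transverse self-intersection number is exactly $i((\gamma_\infty)_l,(\gamma_\infty)_l)$ after recentering, so by the definition of $\G(f)$ we get $\#\alpha_{x,l}\cap\alpha_{x,l} \lesssim f(l)$, i.e. $\#\alpha_{x,l}\cap\alpha_{x,l} \leq (1+o(1))f(l)$ as $l\to\infty$.

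Next I would apply Lemma \ref{lem:ApproxArcs} to $\alpha_{x,l}$: for $l \geq d$ there is a closed geodesic $\delta_l$ that $1$-fellow-travels $\alpha_{x,l}$ with $l(\delta_l) \leq 2l$ and $i(\delta_l,\delta_l) \leq \#\alpha_{x,l}\cap\alpha_{x,l} + dl$. Combining with the previous paragraph, for all $l$ large enough we have $i(\delta_l,\delta_l) \leq (1+o(1))f(l) + dl \leq c_X f(l)$, using $f(l)\geq l$ and $c_X = 2 + d/2$ (the factor $2$ absorbing the $(1+o(1))$ once $l$ is large, and $d/2 \cdot l \leq d/2 \cdot f(l)$ covering the $dl$ term — here monotonicity of $f$ and $f(l)\geq l$ are exactly what is needed). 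Hence $\delta_l \in \G^c_{2l}(c_X f(l))$. Now, as in the proof of Lemma \ref{lem:FiniteCover}, lifting $\delta_l$ and $\alpha_{x,l}$ so their lifts $1$-fellow-travel and dropping a perpendicular from the lift $\tilde x$ of the midpoint, the Lambert-quadrilateral estimate gives $d(\tilde x, \tilde\delta_l) \leq \sinh(1)/\cosh(l/2) < 2e^{-l/2}$.

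Finally I would convert this into membership in some $\C_n$ with $n \geq N$. Given $N$, pick any $l$ with $l \geq \max\{N, d\}$ and large enough that the asymptotic inequality $i(\delta_l,\delta_l) \leq c_X f(l)$ holds. Since $l(\delta_l) \leq 2l$, we want an index $n \geq 2l$ with $\delta_l \in \G^c_n(c_X f(n))$ and $d(x,\Im\delta_l) < \epsilon(n) = 2e^{-n/4}$; take $n = 2l$. Then $l(\delta_l) \leq n$ and, since $f$ is increasing, $i(\delta_l,\delta_l) \leq c_X f(l) \leq c_X f(n)$, so $\delta_l \in \G^c_n(c_X f(n))$. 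And $d(x,\Im\delta_l) \leq d(\tilde x,\tilde\delta_l) < 2e^{-l/2} = 2e^{-n/4} = \epsilon(n)$. Therefore $x \in N_{\epsilon(n)}(\delta_l) \subset \C_n$, and $n = 2l \geq N$, so $x \in \bigcup_{n=N}^\infty \C_n$. Since $x \in \Im\gamma_\infty$ and $\gamma_\infty \in \G(f)$ were arbitrary, $\bigcup_{n=N}^\infty \C_n$ covers $\Im\G(f)$.

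The main obstacle I anticipate is the bookkeeping around the $\lesssim$ versus $\leq$ distinction: $\gamma_\infty \in \G(f)$ only gives an asymptotic bound on self-intersections, so I must be careful that the "large enough $l$" threshold can always be met while still satisfying $l \geq N$ — which it can, since the threshold depends only on $\gamma_\infty$ and $f$, not on $N$ — and that the constant $c_X = 2 + d/2$ genuinely absorbs both the multiplicative $(1+o(1))$ slack and the additive $dl$ term once $l$ is large; this is where the normalizations $f(l)\geq l$ and $f$ increasing, reduced to without loss of generality just before the lemma, do the work.
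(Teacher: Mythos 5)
Your overall strategy matches the paper's: approximate the length-$l$ subarc $\alpha_{x,l}$ centered at $x$ by a closed geodesic $\delta_l$ via Lemma~\ref{lem:ApproxArcs}, bound $d(x,\delta_l)$ by a Lambert-quadrilateral estimate, and set $n = 2l$ so that $x \in N_{\epsilon(n)}(\delta_l) \subset \C_n$. The final bookkeeping and the observation that $l$ can be taken $\geq \max\{N, d\}$ (with the $l_0$-type threshold depending only on $\gamma$, not on $N$) is also correct.

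However, there is a genuine gap in the first step. You claim $\#\alpha_{x,l}\cap\alpha_{x,l} \lesssim f(l)$ because ``$\alpha_{x,l}$'s transverse self-intersection number is exactly $i((\gamma_\infty)_l,(\gamma_\infty)_l)$ after recentering.'' This is not justified and is in general false. The set $\G(f)$ is defined using the \emph{fixed} chosen parameterization of $\gamma$: it controls only $\#\gamma_l\cap\gamma_l$ where $\gamma_l = \gamma|_{[-l/2,\,l/2]}$ is centered at $\gamma(0)$, not the arc centered at $x = \gamma(t_x)$. A length-$l$ arc centered at $\gamma(t_x)$ can have a substantially different self-intersection count than the one centered at $\gamma(0)$; all one gets for free is $\alpha_{x,l} \subset \gamma_{l + 2|t_x|}$, hence $\#\alpha_{x,l}\cap\alpha_{x,l} \leq \#\gamma_{l+2|t_x|}\cap\gamma_{l+2|t_x|}$. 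Since $f$ is only assumed increasing with $f(l) \geq l$, the ratio $f(l + 2|t_x|)/f(l)$ need not tend to $1$ (take $f(l) = e^l$), so $\lesssim f(l)$ does not follow. This is precisely where the monotonicity hypothesis on $f$ is needed: once $l \geq 2|t_x|$ and $l$ is past the threshold, one gets $\#\alpha_{x,l}\cap\alpha_{x,l} \leq 2f(l + 2|t_x|) \leq 2f(2l)$, and then $i(\delta_l,\delta_l) \leq 2f(2l) + dl \leq (2 + d/2)f(2l) = c_X f(2l)$ using $f(2l) \geq 2l$. The paper routes the argument through the basepoint-centered arc $\gamma_{l+t_x}$ and then applies monotonicity for exactly this reason. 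Your final conclusion $\delta_l \in \G^c_{2l}(c_X f(2l))$ happens to be correct, so the gap is localized and repairable with this adjustment; but as written, the intermediate claim $\#\alpha_{x,l}\cap\alpha_{x,l} \lesssim f(l)$ is incorrect and ``after recentering'' conceals the place where the hypotheses on $f$ are genuinely used.
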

\begin{proof}
 Let $x \in \Im \G(f)$. Then there is a $\gamma \in \G(f)$ parameterized by $\gamma : \R \rightarrow \S$ so that $x = \gamma(t_x)$ for some time $t_x \in \R$.
 
 Recall that $\gamma_l = \gamma|_{[-\frac l 2, \frac l2]}$ is the length $l$ subarc of $\gamma$ centered at $\gamma(0)$. Because $\gamma \in \G(f)$, there is some length $l_0$ depending on $\gamma$ so that for all $l \geq l_0$,
 \[
  \# \gamma_l \cap \gamma_l \leq 2 f(l)
 \]
 For each $l$, let $\alpha_{x,l} = \gamma|_{[t_x - \frac l 2, t_x + \frac l2]}$ be the length $l$ subarc of $\gamma$ centered at $x$. Then $\alpha_{x,l} \subset \gamma_{l+t_x}$ for each $l$. 
  
  Let $l \geq \max\{l_0, t_x, d\}$, where $d$ is the constant from Lemma \ref{lem:ApproxArcs}. Then $l+t_x \leq 2l$. Since we assume that $f(l)$ is increasing, this means that $f(l+t_x) \leq f(2l)$. Thus,
 \[
  \# \alpha_{x,l} \cap \alpha_{x,l} \leq 2f(2l)
 \]
 
 By the same argument as in the proof of Lemma \ref{lem:FiniteCover}, there is a closed geodesic $\delta \in \G^c_{2l}(2f(2l) + dl)$ so that 
 \[
  d(x, \delta) \leq 2e^{-l/2}
 \]
Since we assume $f(l) \geq l$, we have, in fact, that $\delta \in \G^c_{2l}(c_X f(2l))$, where $c_X = 2 + d/2$.

Then for each $x$, and for each $n \geq \frac 12 \max\{ l_0, t_x, d\}$, there is a $\delta \in \G^c_n(c_X f(n))$ so that $x \in N_{\epsilon(n)}(\delta)$ for $\epsilon(n) = 2 e^{-n/4}$. In other words, $ \bigcup_{n \geq N} \C_n$ is a cover of $\Im \G(f)$ for each $N$.
\end{proof}

% This lemma follows directly from the following claim.
% \begin{claim}
%  Assume $f(l) \geq l$. For each $x \in \Im \G(f)$, there is an $L_x$ so that for all $l \geq L_x$, there is a $\gamma \in \G^c_l(cf(l))$ with 
%  \[
%   d(x, \gamma) \leq 2e^{-l/2}
%  \]
%  where $c$ is a constant depending only on the metric $X$.
% \end{claim}
% \begin{proof}
%  Let $x \in \Im \G(f)$. That means that $x$ lies on some complete geodesic $\gamma \in \G(f)$ that comes with a parameterization $\gamma : \R \rightarrow \S$. Then 
%  \[
%   x = \gamma(t_x)
%  \]
% for some time $t_x$.
% 
% For each $l \geq 2 t_x$, consider the subarc of $\gamma$ centered at time $t_x$ of length $l$. That is, let 
% \[
%  \alpha_{x,l} = \gamma|_{[t_x-\frac l2, t_x+\frac l2]}
% \]
% 
% We want to estimate the self-intersection number of $\alpha_{x,l}$. Recall that $\gamma_l$ was defined to be the length $l$ subarc of $\gamma$ centered at time 0. Since $\gamma \in \G(f)$, we know that there is some $l_0$ so that for all $l \geq l_0$,
% \[
% \# \gamma_l \cap \gamma_l \leq 2 f(l)
% \]
% For all $l \geq l_x$, we have $\alpha_{x,l} \subset \gamma_{2l}$. Putting these two facts together, whenever $l \geq \max\{l_0, l_x\}$,
% \[
%  \#\alpha_{x,l} \cap \alpha_{x,l} \leq 2f(2l)
% \]
% 
% 
% 
% In particular, whenever $l \geq \max\{l_0, l_x\}$, we have $x \in N_{\epsilon(l)}(\delta)$, where $\epsilon(l) =  2 e^{-l/2}$ and $\delta \in  \G^c_{2l}(2f(2l)+dl)$. Since we assume $f(l) \geq l$, this means $\delta \in  \G^c_{2l}((2+d)f(2l))$. Setting $c_X = 2+d$, we have our claim.
% \end{proof}

\section{Counting the approximating closed curves}
\label{sec:CountingCoverSize}

% We will eventually want to approximate the Lebesgue and Hausdorff measures of $\C_n$, for each $n$. We define $\C_n$ to be the union of $\epsilon(n)$-neighborhoods of the closed geodesics in $\G^c_n(c_X f(n))$. To show that the volume of $\C_n$ is not too large, we need to show that the number of closed geodesics in $\G^c_n(c_X f(n))$ does not grow to quickly.

We will define the Lebesgue (or Hausdorff) measure of any cover $\C_n$ to be the measure of the union of the open sets in $\C_n$. We will eventually wish to show that the measures of these covers go to zero as $n$ goes to infinity. Recall that we defined each cover $\C_n$ as the collection of open neighborhoods about geodesics in $\G^c_n(c_X f(n))$. So to show that these measure go to zero, we need to bound the number of closed geodesics in these sets.

\begin{lem}
\label{lem:ClosedGeodesicGrowth}
 If $f(n) \leq (k n)^2$, then
 \[
  \#\G^c_n(c_Xf(n)) = o(\frac 1 n e^{\mu(k) n})
 \]
 where $\lim_{k \rightarrow 0} \mu(k) = 0$, and $c_X$ is the constant depending only on $X$ defined in Section \ref{sec:Covers}.
\end{lem}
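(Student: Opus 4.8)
The plan is to count closed geodesics on $\S$ of length at most $n$ with at most $K = c_X f(n) \leq c_X (kn)^2$ self-intersections, and show this count is $o(\tfrac1n e^{\mu(k)n})$ with $\mu(k)\to 0$. The natural approach is combinatorial: fix a pants decomposition $\Pi$ of $\S$, cut into right-angled hexagons as in Claim \ref{cla:ArcIntersection}, and encode each closed geodesic $\gamma$ by the cyclic word recording the sequence of hexagons its segments pass through (together with which pair of sides of each hexagon each segment enters and exits). By the estimate $N_\alpha \leq 2 + 3l(\alpha)/C$ from the proof of Claim \ref{cla:ArcIntersection}, a closed geodesic of length at most $n$ is cut into at most $\preccurlyeq n$ segments, so its combinatorial code has length $\preccurlyeq n$. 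Two distinct closed geodesics of length at most $n$ have distinct codes (a geodesic is determined by the bi-infinite, here cyclic, itinerary of sides it crosses), so $\#\G^c_n(c_X f(n))$ is at most the number of admissible cyclic codes of length $\preccurlyeq n$ coming from geodesics with at most $K$ self-intersections.

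First I would set up the crude bound: if there were no constraint on self-intersections, the number of such codes is at most $P^n$ for some constant $P$ depending only on $X$ (the number of hexagons times the bounded number of side-pairs per hexagon), giving the uniform exponential growth rate $\log P$ with no smallness. The key point is that the self-intersection constraint forces the code to be "almost non-backtracking" in a strong sense: each self-intersection of $\gamma$ corresponds to a pair of segments meeting inside a common hexagon, and a code that is too "generic" — visiting many hexagons in many orders — is forced to have $\gtrsim l^2$ self-intersections (this is the mixing/equidistribution heuristic mentioned in the introduction, made combinatorial). So I would partition the codes of length $m \preccurlyeq n$ according to how many self-intersections they carry, and bound the number of codes with at most $K$ self-intersections by something like $\binom{m}{\preccurlyeq K}\cdot Q^{\preccurlyeq K}\cdot (\text{number of codes of simple closed geodesics of comparable length})$: away from the $\preccurlyeq K$ "bad" spots the geodesic behaves like a simple multicurve, and the number of simple closed geodesics of length at most $n$ grows only polynomially in $n$ (Mirzakhani), or at worst subexponentially, which is negligible. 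Then, using $K \leq c_X(kn)^2$, we get
\[
\#\G^c_n(c_X f(n)) \preccurlyeq n^{A}\binom{n}{c_X(kn)^2} Q^{c_X(kn)^2}
\]
for constants $A, Q$ depending only on $X$; taking logs and using $\log\binom{n}{tn}\leq n\,H(t)$ with the binary entropy $H(t)\to 0$ as $t\to 0$, the right side is $e^{\mu(k)n + o(n)}$ with $\mu(k) = c_X k^2\big(\log Q + \text{(entropy term)}\big) \to 0$ as $k\to 0$. Folding the polynomial and $o(n)$ factors in, and being slightly generous with $\mu$, yields the stated $o(\tfrac1n e^{\mu(k)n})$.

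The main obstacle — and the step requiring real care rather than bookkeeping — is the combinatorial lower bound on self-intersections in terms of code complexity: precisely quantifying that a code which deviates from a simple-multicurve code in more than $K$ places (or which contains more than $K$ "crossing patterns") genuinely produces more than $K$ self-intersections of the actual geodesic, not merely of the piecewise-linear code curve. One must control the relationship between transverse self-intersections of the honest geodesic $\gamma$ and the combinatorial crossings of segments inside hexagons (each hexagon is convex, so two segments cross at most once, which helps), and rule out cancellation under homotopy; Claim \ref{cla:ArcIntersection} already supplies the matching upper bound $\#\gamma\cap\gamma \preccurlyeq l^2$, so the content is the reverse inequality at the level of "excess" complexity. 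I would handle this by comparing $\gamma$ to a simple multicurve carried by the same train-track-like combinatorial data and invoking standard facts (of Birman–Series type) that the number of closed geodesics carried by a fixed train track with total weight $\preccurlyeq n$ is polynomial in $n$, then accounting separately for the bounded-per-unit-length "splitting" choices that introduce self-intersections, each such choice being paid for by at least one self-intersection.
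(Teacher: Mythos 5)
Your proposal takes a genuinely different route from the paper, but it contains a critical scaling error that cannot be repaired by bookkeeping. The issue is the exponent: you place a factor of roughly $\binom{n}{K}\,Q^{K}$ in your bound, with $K \leq c_X(kn)^2$. Since $K$ is \emph{quadratic} in $n$, we have $K > n$ for all large $n$, so $\binom{n}{K}$ is vacuous (there are not $K$ positions in a length-$\preccurlyeq n$ code at which to place ``bad spots''), and the entropy estimate $\log\binom{n}{tn} \leq n\,H(t)$ with $t = K/n \approx c_X k^2 n$ is inapplicable because $t \to \infty$ rather than staying in $[0,1]$. More fundamentally, paying one combinatorial ``choice'' per self-intersection is the wrong bookkeeping: a geodesic with $K$ self-intersections should be thought of as having roughly $\sqrt{K}$ segments that cross each other pairwise, so the complexity that scales with code length should be $\sqrt{K} \approx kn$, not $K$. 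With the correct exponent $\sqrt{K}$, the entropy parameter becomes $\sqrt{K}/n \approx k$, which does tend to $0$, and one would indeed recover $\mu(k)\to 0$. Your outline never gets this square-root normalization, and as a consequence the claimed $\mu(k) = c_X k^2(\log Q + \cdots)$ would come from an argument that formally has exponent $K = O(n^2)$ and therefore cannot give a bound of the form $e^{\mu(k)n}$ at all.

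The paper takes a different path that builds the $\sqrt K$ in from the start. It splits the count as (number of mapping-class-group orbits of curves with $\leq K$ self-intersections that contain a representative of length $\leq n$) $\times$ (maximal number of representatives of length $\leq n$ within a single orbit). The first factor is bounded by $\bigl(a_X n/\sqrt{K} + a_X\bigr)^{a_X\sqrt K}$, a nontrivial result cited from the author's orbit-counting paper; this is exactly where the exponent $\sqrt{K}$ comes from (ultimately because a curve with $K$ self-intersections has length $\gtrsim \sqrt K$). The second factor is polynomial in $n$, obtained from Mirzakhani's lattice-point count together with a reduction to filling curves by doubling subsurfaces. Plugging in $K \leq (kn)^2$ then gives $\mu(k) \sim k\log(1/k)$. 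You correctly identify the hard step in your approach — converting ``code complexity'' into a genuine lower bound on geometric self-intersection number — but this is precisely the content that, in the paper, is outsourced to the cited orbit-counting theorem, and your sketch does not supply a substitute for it.
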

In fact, we will show that $\mu(k) = a_X k \log(a_X(1 + \frac 1 k))$, where $a_X$ depends only on $X$.
First we will bound $\# \G^c_L(K)$ for any $L$ and $K$, and then we will set $L = n$ and consider the case where $K(n) \leq (k n)^2$.

\begin{claim}
Let $\S$ be a closed, genus $g$ surface. For any $L$ and $K$, we have
 \[
  \#\G^c_L(K) \leq p(L) \left( a_X \frac{L}{\sqrt K} + a_X \right)^{a_X \sqrt K} %e^{a_X \sqrt K \log \left (a_X \frac{L}{\sqrt K} + a_X \right)}
 \]
where $p(L)$ is a polynomial in $L$, and $a_X$, as well as the coefficients of $p(L)$, depend only on the metric $X$.
\end{claim}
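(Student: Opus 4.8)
The plan is to count closed geodesics of length at most $L$ with at most $K$ self-intersections by encoding each such geodesic combinatorially with respect to a fixed hexagon decomposition of $\S$, exactly the one used in the proof of Claim~\ref{cla:ArcIntersection}. Fix the pants decomposition $\Pi$ and the splitting of each pair of pants into two right-angled hexagons; this gives a finite set $\mathcal{H}$ of hexagons (of size depending only on $g$, hence only on $X$) and a trivalent-type adjacency pattern among their edges. A closed geodesic $\gamma$ of length $\leq L$ is cut by this decomposition into a cyclic sequence of segments $\sigma_1,\dots,\sigma_{N}$, each a maximal subarc inside one hexagon, entering through one side and leaving through another. I would record for $\gamma$ the cyclic word listing, for each segment in order, which hexagon it lies in and which (unordered) pair of sides of that hexagon it connects. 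The key point, as in Claim~\ref{cla:ArcIntersection} via \cite[Step 2, Section 3]{Basmajian13}, is that any three consecutive full segments have total length $\geq C$ for a constant $C = C(X)$, so $N \leq 2 + 3L/C \preccurlyeq L$; combined with the fact that there are only boundedly many (hexagon, side-pair) choices at each step, the number of such cyclic words — and hence the number of such geodesics, since two geodesics with the same combinatorial word are the same — is at most $b_X^{\,a_X L}$ for constants depending only on $X$. That alone is not good enough, because it is exponential in $L$ with no sensitivity to $K$; the refinement that brings in $K$ is the observation that most segments are ``boring'': a segment whose side-pair is one of the few that a simple arc would use forces no intersection, and only the segments whose side-pair is ``crossing'' (i.e., the two sides are linked in the boundary cycle of the hexagon, so this segment must cross certain other segments through the same hexagon) can contribute to $i(\gamma,\gamma)$.

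More precisely, I would show that the self-intersection number $i(\gamma,\gamma)$ is bounded below by a definite fraction of the number of ``crossing'' segments, using that inside a single convex hexagon two segments meet at most once and that a crossing side-pair is forced to intersect at least one other segment in that hexagon whenever $\gamma$ returns there; a counting argument then gives that the number $m$ of crossing segments satisfies $m \preccurlyeq \sqrt{K}$ (the square root appearing because $K$ intersections among $m$ segments confined to $O(1)$ hexagon types behave like $\binom{m}{2}$-type quantities). Meanwhile the long runs of non-crossing segments between consecutive crossing segments are essentially forced: a maximal run of non-crossing segments corresponds to the geodesic fellow-travelling a subsurface of $\S$ in a ``simple'' way, and the number of combinatorial types of such a run of total $j$ segments is only polynomial in $j$ — concretely, such a run is a segment of a simple multicurve arc and is determined by boundedly much data plus a ``how many times around'' integer, giving a polynomial count. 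So the full combinatorial word for $\gamma$ is specified by: the number $m \preccurlyeq \sqrt K$ of crossing segments; for each, one of $O(1)$ local choices ($a_X^{\sqrt K}$ total); and for each of the $m$ gaps between them, the length of that gap (these sum to $N \preccurlyeq L$) together with a polynomial-in-(gap length) amount of data. Distributing $\leq N$ segments into $m$ gaps contributes a factor $\binom{N+m}{m} \leq \big(a_X(L/\sqrt K + 1)\big)^{a_X\sqrt K}$, and the per-gap polynomial data multiplies together into a single polynomial $p(L)$ in $L$ (since each gap's contribution is $\leq p_0(L)$ for a fixed polynomial $p_0$, and there are $\preccurlyeq \sqrt K \preccurlyeq L$ gaps, so the product is still polynomially bounded — or one can simply absorb it, bounding each gap's polynomial factor by $p_0(L)$ and noting $p_0(L)^{\sqrt K}$ is subsumed once one is slightly more careful, but it is cleanest to arrange the gap data so that only $O(\log L)$ total bits are non-polynomial). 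Multiplying, $\#\G^c_L(K) \leq p(L)\,\big(a_X L/\sqrt K + a_X\big)^{a_X \sqrt K}$.

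The main obstacle I expect is making rigorous the claim that the ``simple'' (non-crossing) runs contribute only polynomially rather than exponentially: a priori a run of $j$ non-crossing segments still has $b_X^{\,j}$ combinatorial types if one only uses local hexagon data, and the exponential saving comes from a genuinely global fact — that a geodesic which is ``locally simple'' along a long run must in fact be (a sub-arc of) an embedded multicurve, up to bounded error, so its combinatorics along that run lie in a space of polynomial size (the multicurve is determined by $O(g)$ intersection-number coordinates, i.e.\ a point in $\Z^{O(g)}$ of bounded-by-$L$ size, hence polynomially many choices). I would establish this by a surgery argument: if a run of non-crossing segments did not trace an embedded arc, one could find an innermost bigon or an unforced self-crossing, contradicting either minimality of the geodesic in its homotopy class or the non-crossing hypothesis; quantifying ``up to bounded error'' is where the technical care goes. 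A secondary, more routine obstacle is bookkeeping the constants so that everything depends only on $X$ (equivalently only on $g$), and choosing the polynomial $p(L)$ once and for all so that it absorbs all the gap-data factors; both are straightforward but tedious. Once the claim is proved, setting $L = n$, $K = c_X f(n) \leq c_X (kn)^2$, so $\sqrt K \preccurlyeq k n$ and $L/\sqrt K \preccurlyeq 1/k$, gives $\#\G^c_n(c_X f(n)) \leq p(n)\,(a_X/k + a_X)^{a_X k n} = p(n)\, e^{a_X k n \log(a_X(1+1/k))}$, which is $o(\tfrac1n e^{\mu(k)n})$ with $\mu(k) = a_X k \log(a_X(1+\tfrac1k))$ after slightly enlarging $a_X$ to swallow the polynomial $p(n)/n \to \infty$ but $e^{\epsilon n}$-negligible factor; and $\mu(k) \to 0$ as $k \to 0$ since $k\log(1/k) \to 0$. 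This is Lemma~\ref{lem:ClosedGeodesicGrowth}.
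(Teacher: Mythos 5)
Your approach is genuinely different from the paper's. The paper proves the claim by splitting $\G^c_L(K)$ into mapping class group orbits: it cites \cite{SapirOrbits} for the bound $\#\O(L,K) \leq ( a_X L/\sqrt K + a_X )^{a_X\sqrt K}$ on the number of $\Mod_\S$-orbits meeting $\G^c_L(K)$, and then shows separately that each orbit contributes at most polynomially many (in $L$) curves, via a doubling trick: pass from a curve filling a subsurface $T\subset\S$ to a filling curve on the doubled closed surface $Q = T\cup\bar T$, then apply the filling-curve counting bound $\#s(L,\cdot)\preccurlyeq L^{6g-6}$ from \cite{SapirU}. Your proposal bypasses the group action entirely and tries to count closed geodesics directly by encoding them as cyclic words in a hexagon decomposition, which is plausibly closer in spirit to what the cited \cite{SapirOrbits} does internally, but as written it would have to reprove that paper's main estimate from scratch.

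The two steps you flag as needing care are genuine gaps, and I do not think the first one can be closed in the form you state. You want the number $m$ of ``crossing'' segments to satisfy $m\preccurlyeq\sqrt K$, and justify the square root by saying the $K$ intersections among $m$ crossing segments ``behave like $\binom m2$-type quantities.'' For that to give $K\gtrsim m^2$, any two crossing-type segments lying in the same hexagon would have to actually intersect. But they need not: two segments in a convex hexagon with the same side-pair, or with non-linked side-pairs, can be disjoint (nested or parallel), so there is no per-segment ``crossing type'' that forces pairwise intersection. Your own alternative supporting argument --- that $i(\gamma,\gamma)$ is bounded below by a definite fraction of the number of crossing segments --- yields only $m\preccurlyeq K$, which in the final bound replaces $\sqrt K$ by $K$ in the exponent and is far too weak. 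The second gap, that a long run of non-crossing segments has only polynomially many combinatorial types because it must fellow-travel an embedded multicurve up to bounded error, is plausible but is essentially the content of a quantitative rigidity statement for locally simple geodesics; the surgery sketch (find an innermost bigon, contradict geodesity) shows each such run is embedded but does not by itself produce the coordinatization by $O(g)$ integers of size $\preccurlyeq L$ that you need. Both missing pieces are precisely the substance that the paper outsources to \cite{SapirOrbits} and to \cite{Mir16}/\cite{SapirU}, so the proposal as it stands does not establish the claim.
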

In fact, one can look carefully at the argument in \cite[Lemma 5.6]{Mir16} to see that $p(L)$ can be replaced by a polynomial in $K$ times $L^{6g-6}$. However, the formula in Lemma \ref{lem:ClosedGeodesicGrowth} is easier to prove, and suffices for Lemma \ref{lem:ClosedGeodesicGrowth}.
\begin{proof}
 Let $\Mod_\S$ denote the mapping class group of $\S$. Then $\Mod_\S$ acts on $\G^c$, preserving self-intersection number. For each $\gamma \in \G^c$ let $\Mod_\S \cdot \gamma$ denote its orbit. Let $\O(\cdot, K)$ be the set of orbits of curves with at most $K$ self-intersections:
 \[
  \O(\cdot, K) = \{\Mod_\S \cdot \gamma \ | \ i(\gamma, \gamma) \leq K\}
 \]
If $\gamma$ has $K$ self-intersections, then the shortest curve in $\Mod_\S \cdot \gamma$ has length between $c_1 \sqrt K$ and $ c_2 K$ for some constants $c_1$ and $c_2$. In fact, there exist such constants, depending only on $X$, for which these bounds are tight \cite{AGPS16,Basmajian13,Gaster15}. Thus, if $L$ is small enough, then not all $\Mod_\S$ orbits contain curves of length at most $L$. So we let 
\[
 \O(L,K) = \{\Mod_\S \cdot \gamma \ | \ \Mod_\S \cdot \gamma \cap \G^c_L(K) \neq \emptyset \}
\]
be those orbits that contain curves of length at most $L$.

In \cite{SapirOrbits}, we show that 
\[
 \# \O(L,K) \leq \left ( a_X \frac{L}{\sqrt K} + a_X \right)^{a_X \sqrt K}  %e^{a_X \sqrt K \log \left (a_X \frac{L}{\sqrt K} + a_X \right)}
\]
for a constant $a_X$ depending only on $X$.

Since we have a bound on the number of orbits, we just need a bound on the number of curves in each orbit. For each $\gamma$, let 
\[
 s(L, \gamma) = \{ \gamma' \in \Mod_\S \cdot \gamma \ | \ l(\gamma') \leq L\}
\]
In \cite{Mir16}, Mirzakhani shows that $s(L,\gamma)$ grows asymptotically like $a_{\gamma, X} L^{6g-6}$, where the constant $a_{\gamma, X}$ depends on the $\Mod_g$ orbit of $\gamma$, and on $X$. The dependence of $a_{\gamma, X}$ on $\gamma$ is difficult to determine. As mentioned above, a careful analysis of \cite[Lemma 5.6]{Mir16} should imply that $s(L,\gamma) \leq p(K) L^{6g-6}$, where $p(K)$ is a polynomial in $K$ whose coefficients depend only on the metric $X$. However, there is a faster way to see that 
\[
\# s(L,\gamma) \preccurlyeq L^{30g-12}
\]
where the constant depends only on $X$.

First, suppose $\gamma$ is a filling curve on a closed, genus $g$ surface. Then we the proof of \cite[Lemma 2.2]{SapirU} implies that
\[
 \# s(L,\gamma) \preccurlyeq L^{6g-6}
\]
where the constant depends only on $X$.

Now suppose $\gamma$ only fills a proper subsurface $T \subset \S$. Then by \cite[Proposition 2.7]{SapirU}, $l(\partial T) \leq 2l(\gamma)$. So for any $g \in \Mod_\S$ with $l(g \cdot \gamma) \leq L$ we have $l(g \cdot \partial T) \leq 2L$. The number of simple closed curves of length at most $2L$ on $\S$ is at most $b_X (2L)^{6g-6}$, where $b_X$ is a constant depending only on $X$.

So we can now fix a subsurface $T$ of $\S$, and count all curves in $\Mod_\S \cdot \gamma$ that fill $T$:
\[
 s(L, \gamma, T) = \{\gamma' \in \Mod_\S \cdot \gamma \ | \ l(\gamma') \leq L, \gamma' \text{ fills } T\}
\]

Double $T$ across its boundary. This gives a new surface $Q = T \cup \bar T$, where $\bar T$ is the complement of $T$ in $Q$. The metric on $Q$ is obtained by doubling the metric on $T$. Moreover, if $\gamma' \in \Mod_\S \cdot \gamma$ lies in $T$, then it has a mirror image $\bar \gamma'$ that lies in $\bar T$. Finally, for each component $\alpha$ of $\partial T$, there is a curve $\beta$ that intersects $\alpha$ minimally with $l(\beta) \preccurlyeq l(\gamma')$, where the constant depends only $X$ (\cite[Proposition 2.7]{SapirU}). Let $\eta$ be the union of all such curves $\beta$ together with $\partial T$. Then consider the curve
\[
 \delta = \gamma' \cup \bar \gamma' \cup \eta
\]
By construction, $\delta$ fills the closed surface $Q$. If $\S$ had genus $g$, then the genus of $Q$ is at most $4g$. (Really, the genus of $Q$ is at most $4g-1$, but this slight improvement in the upper bound leads to messier formulae, which are still not tight.)
% Moreover, 
% \[
%  l(\delta) \preccurlyeq l(\gamma')
% \]
% where the constant depends only on $X$.
% 
% Suppose $g \in \Mod_T$. Then we can form a mapping class $g \bar g \in \Mod_Q$, where $\bar g$ is the action of $g$ applied to $\bar T$. Then 
% \[
%  g \bar g \cdot \delta = g \gamma' \cup \bar g \bar \gamma' \cup g \bar g \mu
% \]
% where we still have $l(g \bar g \mu) \leq c_Tl(g \gamma') + a_X$. Thus,
% \[
%  l(g \bar g \cdot \delta) \leq a_X l(g \cdot \gamma') + a_X
% \]
% 

So we can count curves in $ s(L, \gamma, T)$ on $\S$ by counting curves in $s(L, \delta)$ on $Q$, instead. In fact, since $\delta$ fills a closed surface, we get
\[
 s(L,\gamma, T) \leq s(L,\delta) \preccurlyeq L^{24g - 6}
\]
Thus,
\[
 s(L,\gamma) \preccurlyeq L^{6g-6} \cdot L^{24g - 6} = L^{30g - 12}
\]
for constants that depends only on the metric $X$.

Combined with the orbit counting result, we get that
\[
 \# \G^c_L(K) \leq p(L) \left ( a_X \frac{L}{\sqrt K} + a_X \right)^{a_X \sqrt K}
\]
where $p(L)$ is a polynomial in $L$ of degree $30g-12$, whose constants depend only on $X$, and $a_X$ depends only on $X$.
\end{proof}

\begin{proof}[Proof of Lemma \ref{lem:ClosedGeodesicGrowth}]
Now set $L = n$ and suppose $K \leq (k n)^2$ for some $k$. Then we are ready to show that
\[
  \# \G^c_n(c_X f(n)) = o(\frac 1 n e^{\mu(k) \cdot n})
\]
where
\[
 \mu(k) = \frac 12  a_X k\ln(\frac{a_X}{k} + a_X)
\]

Because $p(n) = o(\frac 1 n e^{\frac 12 \mu(k) n})$ for any polynomial $p(n)$ and any positive $\mu(k)$, we only need to find a function $\mu(k)$ so that 
\[
 \left ( a_X \frac{n}{\sqrt K} + a_X \right)^{a_X \sqrt K} = o(e^{\frac 12 \mu(k) n})
\]
whenever $K \leq (k n)^2$.
% the exponential term is in $o(e^{\frac 12 \mu(k) L})$. 

For any fixed $n \geq 1$, we have that $\left ( a_X \frac{n}{\sqrt K} + a_X \right)^{a_X \sqrt K}$ is an increasing function in $K$, since we may assume that $a_X \geq e$.
% To see this, set $a = a_X$, $b = a_Xn$ and let $f(x) = (\frac b x + a)^{a x}$. Then
% \[
%  f'(x) = a \left [\ln(\frac bx + a) - \frac{b}{ax + b}  \right] f(x)
% \]
% Note that $\ln (\frac bx+a) \geq 1$ since we assume that $a \geq e$, and $\frac{b}{ax + b} \leq 1$, since $ax \geq 0$. Since $f(x)$ is always positive, $f'(x)\geq 0$. Therefore, $f(x)$ is increasing. 
Since we assume that $K \leq (k n)^2$, this means
\[
 \left ( a_X \frac{n}{\sqrt K} + a_X \right)^{a_X \sqrt K} \leq \left ( a_X \frac 1 k+ a_X \right)^{a_X \cdot kn} = e^{n \cdot a_X k\ln(\frac{a_X}{k} + a_X)}
\]
In other words, if $K = K(n)\leq (k n)^2$, then
\[
 \left ( a_X \frac{n}{\sqrt K} + a_X \right)^{a_X \sqrt K} = o(e^{1/2 \mu(k) \cdot n})
\]
for $\mu(k) = 4 a_X k\ln(\frac{a_X}{k} + a_X)$. Lastly, note that $\lim_{k \rightarrow 0} \mu(k) =0$.
% To see this, note that for all $k < 1$, $\frac{a_X}{k} \geq a_X$, so $\frac{a_X}{k} + a_X \leq 2 \frac{a_X}{k}$. Then $k \ln(\frac{2a_X}{k}) = k \ln(2a_X) - k \ln(k)$. The first term clearly goes to zero with $k$. The fact that $\lim_{k \rightarrow 0}k \ln(k) = 0$ is an elementary application of L'Hospital's rule.

\end{proof}

\section{Nowhere density}
\label{sec:NWD}
A set $U \subset \S$ is nowhere dense if its closure has an empty interior. In particular, $U$ is nowhere dense if, for any open ball $\B$, $\B \setminus U$ contains a non-empty open set. 
% To see this, suppose $\B \setminus U$ contains a non-empty open set for all open balls $\B$, but $U$ is not nowhere dense. That means that the closure $\bar{U}$ of $U$ has non-empty interior, $U^o$. Any open set contains a open ball, so suppose $\B$ is a open ball inside $U^o$. Then $\B \setminus U \subset \B \setminus U^o$. But $\B \subset U^o$, so $\B \setminus U$ is empty. This is a contradiction.

We will show this is the case for $\Im \G(f,L)$. In particular, Lemma \ref{lem:FiniteCover} gives us a family $\{\C_n\}$ of covers of $\Im\G(f,L)$, where each $\C_n$ is a finite collection of regular neighborhoods about closed geodesics. Below we show that these covers have arbitrarily small Lebesgue measure. (The Lebesgue measure of $\C_n$ is defined to be the measure of the union of all elements of $\C_n$.)  This will imply that $\Im \G(f,L)$ is nowhere dense.

\begin{proof}[Proof of Theorem \ref{thm:NWD}]
 By Lemma \ref{lem:FiniteCover}, there is an $N$ depending only on $L$ so that for all $n \geq N$, $\C_n$ is a cover of $\Im \G(f,L)$. We wish to estimate the Lebesgue measure of $\C_n$. Recall that $\C_n$ is the set of $\epsilon(n)$-neighborhoods of the closed geodesics in $\G^c_n(c_Xf(n))$, where $\epsilon(n) = 2e^{-n/4}$. 
 
 Let $\lambda(A)$ denote the Lebesgue measure of any subset $A \subset \S$. If $\gamma \in \G^c_n(c_X f(n))$, then $l(\gamma) \leq n$. So for all $\epsilon(n)$ small enough, the measure of $N_{\epsilon(n)}(\gamma)$ is bounded above by
 \[
  \lambda \left ( N_{\epsilon(n)}(\gamma) \right ) \leq 5  n e^{-n/4}
 \]
 By Lemma \ref{lem:ClosedGeodesicGrowth}, if $f(n) \leq (k n)^2$, then $\# \G^c_n(c_X f(n)) = o \left ( \frac 1n e^{\mu(k) n} \right )$. So
 \[
  \lambda (\C_n) = o \left (e^{(\mu(k) - \frac 14) n} \right )
 \]

 We have that $\lim_{k \rightarrow 0} \mu(k) = 0$. So there is some $k_0$ so that for all $k < k_0$, $\mu(k) < \frac 14$. Then for all $k \leq k_0$, 
 \[
 \lim_{n \rightarrow \infty} \lambda(\C_n) = 0
 \]
 
 Suppose $k \leq k_0$. Choose any open ball $\B \subset \S$. Choose $n$ so that $\lambda(\C_n) < \frac 12 \lambda(\B)$. Then $\B$ is crossed by finitely many elements of $\C_n$. The elements of $\C_n$ are regular neighborhoods of closed geodesics, so our choice of $n$ guarantees that $\B \setminus \C_n$ has non-empty interior. But $\C_n$ is an open cover of $\Im \G(f,L)$. So $\B \setminus \Im \G(f,L)$ has non-empty interior, as well. Therefore, $\Im \G(f,L)$ is nowhere dense for all $L$ and all functions $f$ with $f(l) \leq (kl)^2$.
 
%  Choose any point $y \in \S$ that does not lie on $\Im \G(f,L)$. Take any ball $B(y,r)$ about $y$ of radius $r$.  Then for each $n$, $B(y,r)$ is crossed by finitely many $\epsilon(l_n)$-neighborhoods of closed geodesics in $\G^c(l_n, d(l_n))$. The total area occupied by the neighborhoods is bounded above by $1/n$. Thus, $B(y,r) \setminus \Im \G(f,L)$ contains non-empty open sets. Therefore, whenever $f(l) \leq (k_0 l)^2$,  $\Im \G(f,L)$ is nowhere dense for all $L$.
\end{proof}

\section{Hausdorff dimension}
\label{sec:HausDimProof}

\begin{proof}[Proof of Theorem \ref{thm:HausDim}]

The Hausdorff dimension of a set is defined as follows. Given a subset $G$ of a metric space $X$, let $\C = \{\B(x_i, r_i)\}$ be a countable cover of $G$ by metric balls centered at $x_i$ and of radius $r_i$, for each $i$. We define the $h$-dimensional Hausdorff measure of $\C$ to be $\nu_h(\C) = \sum r_i^h$. The $h$-dimensional Hausdorff measure of a set $G$ is defined as
\[
 \nu_h(G) = \inf_\C \nu_h(\C)
\]
where the infimum is taken over all such covers of $G$. Then the Hausdorff dimension of $G$ is defined to be 
\[
 \dim_H(G) = \inf\{h \ | \ \nu_h(G) = 0\}
\]

By Lemma \ref{lem:NestedCover}, infinite unions of the covers $\C_1, \dots, \C_n, \dots $ cover $\Im \G(f)$. These are covers by regular neighborhoods of closed geodesics, but we can use them to build covers of $\Im \G(f)$ by metric balls. In fact, for each $n$, we can build new cover $\C^H_n$, which is a collection of balls whose union contains the union of open sets in $\C_n$. Note that for all $\epsilon(n)$ small enough, we can cover the $\epsilon(n)$-regular neighborhood of any $\gamma \in \G^c_n(c_X f(n))$ by $2 \frac{n}{\epsilon(n)}$ balls of radius $2 \epsilon(n)$. So let $\C_n^H$ be the union of all these balls for each open set in $\C_n$. We will use the collection $\{\C^H_n\}$ of these covers to bound the Hausdorff dimension of $\Im \G(f)$.

% In fact, for each $n$, we can build a collection of balls $\C^H_n$ that covers $\C_n$ as follows: each closed geodesic $\gamma \in \G^c_n(c_X f(n))$ has length at most $n$. So for all $\epsilon(n)$ small enough, we can cover $N_{\epsilon(n)}(\gamma)$ by $\frac{2n}{\epsilon(n)}$ balls of radius $2 \epsilon(n)$. Let $\C^H_n$ be the set of these covers for each $\gamma \in \G^c_n(c_X f(n))$.

Then Lemmas \ref{lem:NestedCover} and \ref{lem:ClosedGeodesicGrowth} allow us to estimate the Hausdorff $h$-volume of $\Im \G(f)$ by estimating the volume of $\C^H_n$. (The volume of a cover is defined to be the volume of the union of all elements of the cover.)
% With this, we are now ready to prove Theorem \ref{thm:HausDim}. 

Let $\C^H_n$ be the collection of metric balls defined above. First, we find a condition on $h$ so that 
\[
 \lim_{n \rightarrow \infty} \nu_h(\C^H_n) = 0
\]
Each ball in $\C^H_n$ has radius $2 \epsilon(n) = 4 e^{-\frac n 4}$. 
Each closed geodesic $\gamma \in \G^c_n(c_X f(n))$ has length $n$, so it contributes $\frac{2n}{\epsilon(n)} = n e^{n/4}$ balls to the cover. 
So the total Hausdorff $h$-volume of $\C^H_n$ is bounded above by
\begin{align*}
 \nu_h (\C^H_n) & = \sum_{\G^c_n(c_X f(n))} n e^{\frac n4} \left (4 e^{-\frac n 4} \right)^h \\
   & = \sum_{\G^c_n(c_X f(n))} 4^h n e^{\frac n4 (h-1)}
\end{align*}
If $f(n) \leq (k n)^2$, then by Lemma \ref{lem:ClosedGeodesicGrowth}, the number of these closed geodesics in $\G^c_n(c_X f(n))$ grows like $o \left(\frac 1n e^{\mu(k) n} \right )$, where $\lim_{n\rightarrow 0} \mu(k) = 0$. Since $4^h$ is a constant for each $h$,
\[
 \nu_h (\C^H_n)  = o \left(e^{n (\mu(k) - \frac{h-1}{4})} \right )
\]
In particular, $\lim_{n \rightarrow \infty} \nu_h(\C^H_n) = 0$ whenever $h > 4 \mu(k) + 1$.

Suppose $h > 4 \mu(k) + 1$. Then there is a subsequence $\{n_i\}$ so that $\nu_h(\C^H_{n_i}) \leq 2^{-i}$ for each $i$. By Lemma \ref{lem:NestedCover}, any infinite subsequence of $\{\C^H_n\}$ covers $\Im \G(f)$. In particular, for any $N \geq 0$, $\cup_{i=N}^\infty \C^H_{n_i}$ covers $\Im \G(f)$. Thus,  whenever $h > 4 \mu(k) + 1$,
\[
 \nu_h(\Im \G(f)) \leq 2^{-N}
\]
for any $N$. In other words, the Hausdorff dimension of $\Im \G(f)$ is at most $4 \mu(k) + 1$. 

Furthermore, suppose $f(l) = o(l^2)$. Then $\G(f) \subset \cap_{k=1}^{\infty} \G(f_k)$, for $f_k = kl^2$. So in this case, the Hausdorff dimension of $\Im \G(f)$ is 1.

\end{proof}

 \bibliographystyle{alpha}
  \bibliography{recount}

\end{document}